\newcommand{\bi}{\begin{itemize}}
\newcommand{\ei}{\end{itemize}}
\newcommand{\ben}{\begin{enumerate}}
\newcommand{\een}{\end{enumerate}}
\newcommand{\be}{\begin{equation}}
\newcommand{\ee}{\end{equation}}
\newcommand{\bea}{\begin{eqnarray}} 
\newcommand{\eea}{\end{eqnarray}}
\newcommand{\ba}{\begin{align}} 
\newcommand{\ea}{\end{align}}
\newcommand{\bse}{\begin{subequations}} 
\newcommand{\ese}{\end{subequations}}
\newcommand{\bc}{\begin{center}}
\newcommand{\ec}{\end{center}}
\newcommand{\bfi}{\begin{figure}}
\newcommand{\efi}{\end{figure}}
\newcommand{\ca}[2]{\caption{#1 \label{#2}}}
\newcommand{\ig}[2]{\includegraphics[#1]{#2}}
\newcommand{\bmp}[1]{\begin{minipage}{#1}}
\newcommand{\emp}{\end{minipage}}
\newcommand{\bp}{\begin{proof}}
\newcommand{\ep}{\end{proof}}
\newcommand{\tbox}[1]{{\mbox{\tiny #1}}}
\newcommand{\half}{\mbox{\small $\frac{1}{2}$}}
\newcommand{\al}{\alpha}
\newcommand{\ttau}{\tilde\tau}   
\newcommand{\tsig}{\tilde\sigma}   
\newcommand{\om}{\omega}
\newcommand{\eps}{\epsilon}
\newcommand{\pO}{{\partial\Omega}}
\newcommand{\C}{\mathbb{C}}
\newcommand{\CpO}{\C\backslash\pO}  
\newcommand{\CO}{\C\backslash\overline{\Omega}}  
\newcommand{\R}{\mathbb{R}}
\newcommand{\RR}{\mathbb{R}^2}
\newcommand{\vN}{v^{(N)}}   
\newcommand{\Ny}{Nystr\"om}
\newcommand{\Nf}{M}              
\DeclareMathOperator{\im}{Im}
\DeclareMathOperator{\re}{Re}
\newcommand{\matlab}{MATLAB}               
\newcommand{\mpspack}{{\tt MPSpack}}       
\newtheorem{thm}{Theorem}
\newtheorem{lem}[thm]{Lemma}
\newtheorem{pro}[thm]{Proposition}
\newtheorem{rmk}[thm]{Remark}
\begin{document} 

\title{Evaluation of layer potentials close to the boundary
for Laplace and Helmholtz problems on analytic planar domains}

\author{Alex H. Barnett\thanks{Department of Mathematics, 
Dartmouth College, Hanover, NH, 03755, USA}}

\maketitle
\begin{abstract}
Boundary integral equations are an
efficient and accurate tool for the numerical solution
of 
elliptic boundary value problems.
The solution is expressed as a layer potential; however,
the error in its evaluation grows large near the boundary
if a fixed 
quadrature rule is used.
Firstly, we analyze this error for Laplace's equation with
analytic density and the global periodic trapezoid rule,
and find an
intimate connection to the complexification of the boundary parametrization.
Our main result is then a simple and efficient scheme for accurate
evaluation up to the boundary
for single- and double-layer potentials for the Laplace and Helmholtz equations,
using surrogate local expansions about centers placed near the boundary.
The scheme---which also underlies the recent QBX \Ny\ quadrature---%
is asymptotically
exponentially convergent (we prove this in the analytic Laplace case),
requires no adaptivity,
generalizes simply to three dimensions,
and has $O(N)$ complexity when executed via a
locally-corrected fast multipole sum. 
%
We give an example of high-frequency scattering from an obstacle
with perimeter 700 wavelengths long, evaluating the
solution at 
$2\times 10^5$ points near the boundary
with 11-digit accuracy in 30 seconds in MATLAB on a single CPU core.
\end{abstract}

\begin{keywords}
potential theory, layer potential, integral equation, Laplace equation, Helmholtz equation, close evaluation


\end{keywords}

\section{Introduction}

We are interested in solving
boundary-value problems (BVPs) of the type
\bea
(\Delta + \om^2) u & = & 0 \qquad \mbox{ in }\Omega
\label{bvp1}
\\
u & = & f \qquad \mbox{ on }\pO
\label{bvp2}
\eea
where $\Omega\subset\RR$ is either an interior or exterior domain
with boundary curve $\pO$, and
either $\om=0$ (Laplace equation) or $\om>0$ (Helmholtz equation).
We will mostly use the above Dirichlet boundary condition in our examples,
and note that Neumann and other types of boundary conditions
can equally well benefit from our technique.
The numerical solution of this type of BVP has
numerous applications in electrostatics,
equilibrium problems, and acoustic or electromagnetic
wave scattering in the frequency domain.
The case $f\equiv0$ includes eigenvalue (cavity resonance)
problems for the Laplacian.

The boundary integral approach \cite{atkinson,LIE} has many advantages over
conventional finite element or finite difference discretization of the domain:
very few unknowns are needed since the problem is now of lower dimension,
provable high-order accuracy is simple to achieve,
and, in exterior domains, radiation conditions are automatically
enforced without the use of artificial boundaries.
Thus the approach is especially useful for wave scattering,
including at high frequency \cite{coltonkress,bathreadingrev}.

The integral equation approach exploits
the known fundamental solution for the PDE,
\be
\Phi(x,y) = \left\{\begin{array}{ll}
\frac{1}{2\pi}\log\frac{1}{|x-y|}, & \om=0,\\
\frac{i}{4}H_0^{(1)}(\om|x-y|), & \om>0.
\end{array}\right.
\label{Phi}
\ee
For the interior case, which is the simplest, the BVP \eqref{bvp1}--\eqref{bvp2}
is converted to a boundary integral equation (BIE), which is
of the Fredholm second kind,
\be
(D - \half I)\tau = f~,
\label{bie}
\ee
where $\tau$ is an unknown density function on $\pO$,
$I$ is the identity,
$D:C(\pO)\to C(\pO)$ is the double-layer integral operator with kernel $k(x,y) =
\partial \Phi(x,y)/\partial n(y)$, and $n(y)$ is the outward normal at
$y\in\pO$.
Numerical solution of \eqref{bie}, for instance via the Nystr\"om method
\cite[Ch.~12]{LIE} \cite[Ch.~4]{atkinson} with $N$ quadrature nodes on $\pO$,
results in an approximation to $\tau$ sampled at these nodes,
from which one may recover an approximation to $\tau$
on the whole of $\pO$ by interpolation (e.g.\ \Ny\ interpolation).
Finally, one can evaluate the approximate BVP solution at any target
point $x$ in the domain as the 
double-layer potential
\be
u(x) = \int_\pO \frac{\partial \Phi(x,y)}{\partial n(y)}\tau(y) ds_y ~,
\qquad x\in\Omega~.
\label{dlp}
\ee
It is convenient, and common practice, to
reuse the {\em existing} $N$ quadrature nodes underlying the
Nystr\"om method to approximate the integral \eqref{dlp}---in other
words, to skip the interpolation step;
we will call this the
{\em native} evaluation scheme.\footnote{In
    \cite{qbx} this is called the ``underlying'' scheme, and in
    \cite{helsing_close} ``straight-up'' quadrature.}
It is common wisdom that this gives an accurate
solution when $x$ is ``far'' from $\pO$, but a very inaccurate one
close to $\pO$, even when the $\tau$ samples themselves are accurate.
Fig.~\ref{f:err} (a),
whose content will be familiar to anyone who
has 
tested the accuracy of a BIE method,
illustrates this: the error of evaluation
grows to $O(1)$ as one nears $\pO$.
(Also see \cite[Fig.~2]{helsing_close} or \cite[Fig.~4]{qbx} which show
a similar story for a panel-based underlying quadrature.)

\bfi 
\bmp{3in}
(a) {\small $\log_{10} |u^{(N)}-u|$\quad DLP, $\tau\equiv 1$, \;$\om=0$}\\
\ig{width=2.8in}{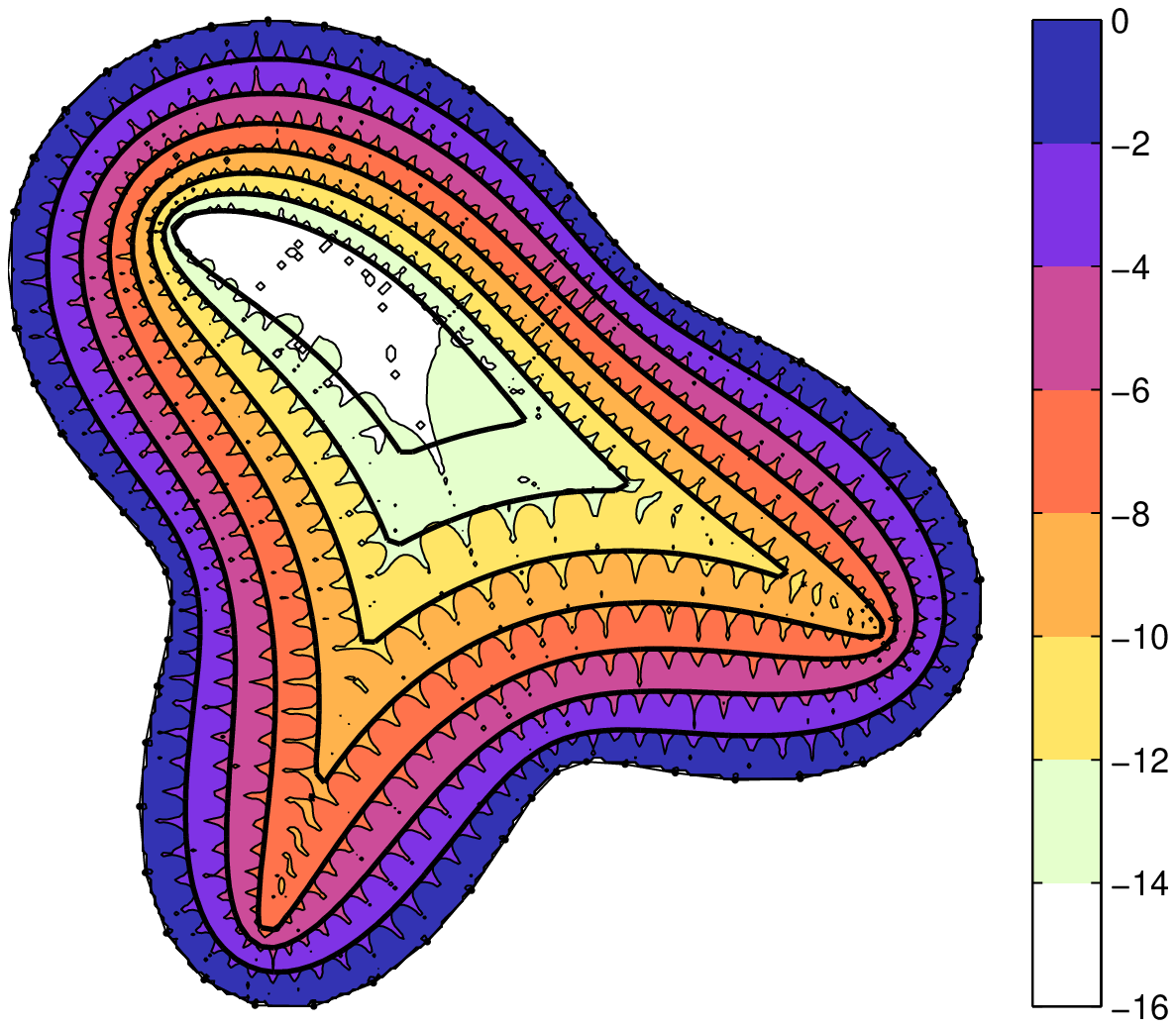}
\emp
\bmp{3in}
(b) {\small $\log_{10} |u^{(N)}-u|$ \quad DLP, BVP $u(x,y) = xy$,\;$\om=0$}\\
\ig{width=2.8in}{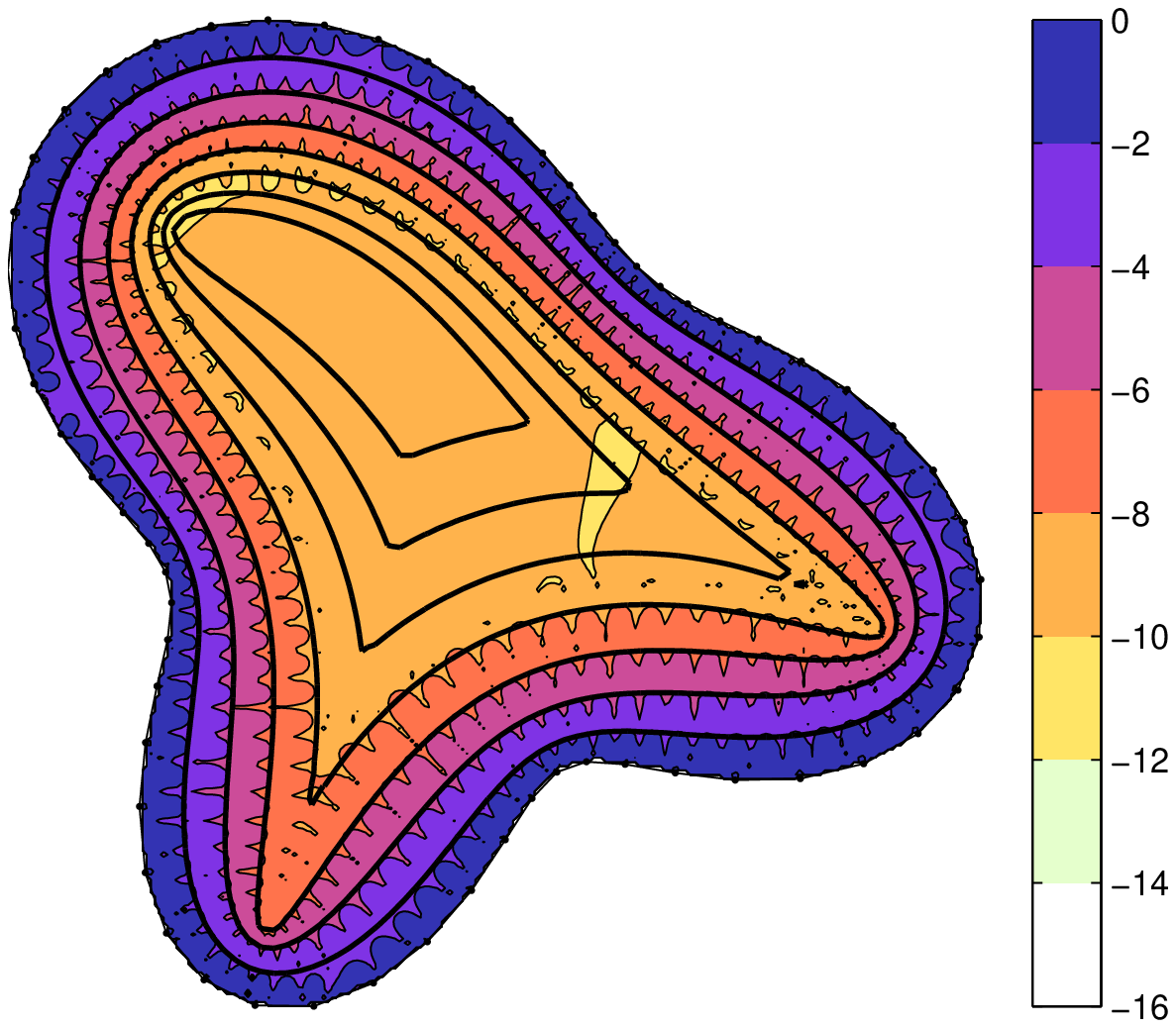}
\emp
\vspace{2ex}
\bmp{3in}
(c) {\small $\log_{10} |u^{(N)}-u|$ \quad DLP, $\tau\equiv 1$, \;$\om=0$}\\
\ig{width=2.8in}{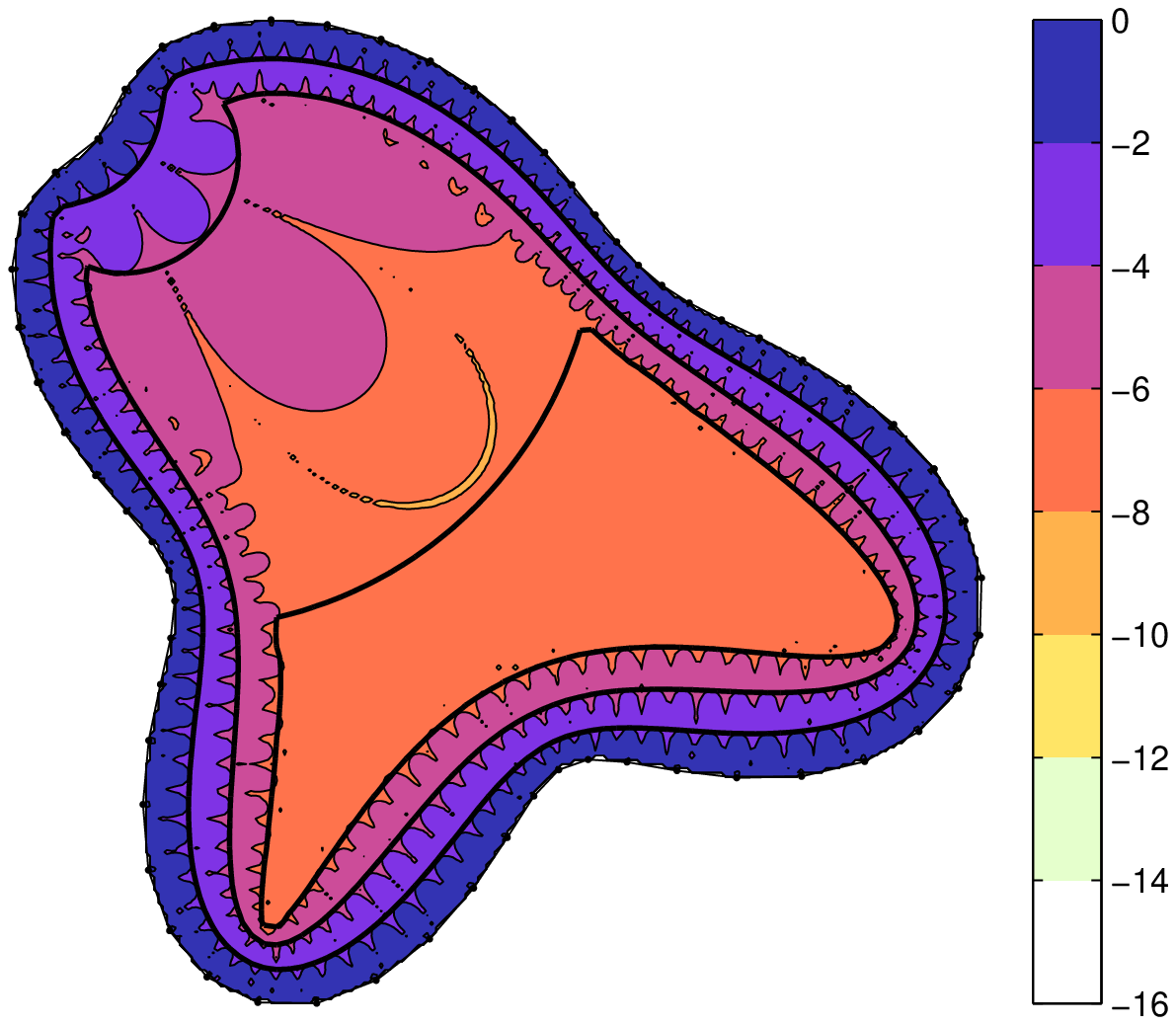}
\emp
\bmp{3in}
(d) {\small $\log_{10} |u^{(N)}-u|$ \quad GRF, $u$ point src, $\om=2$} \\
\ig{width=2.8in}{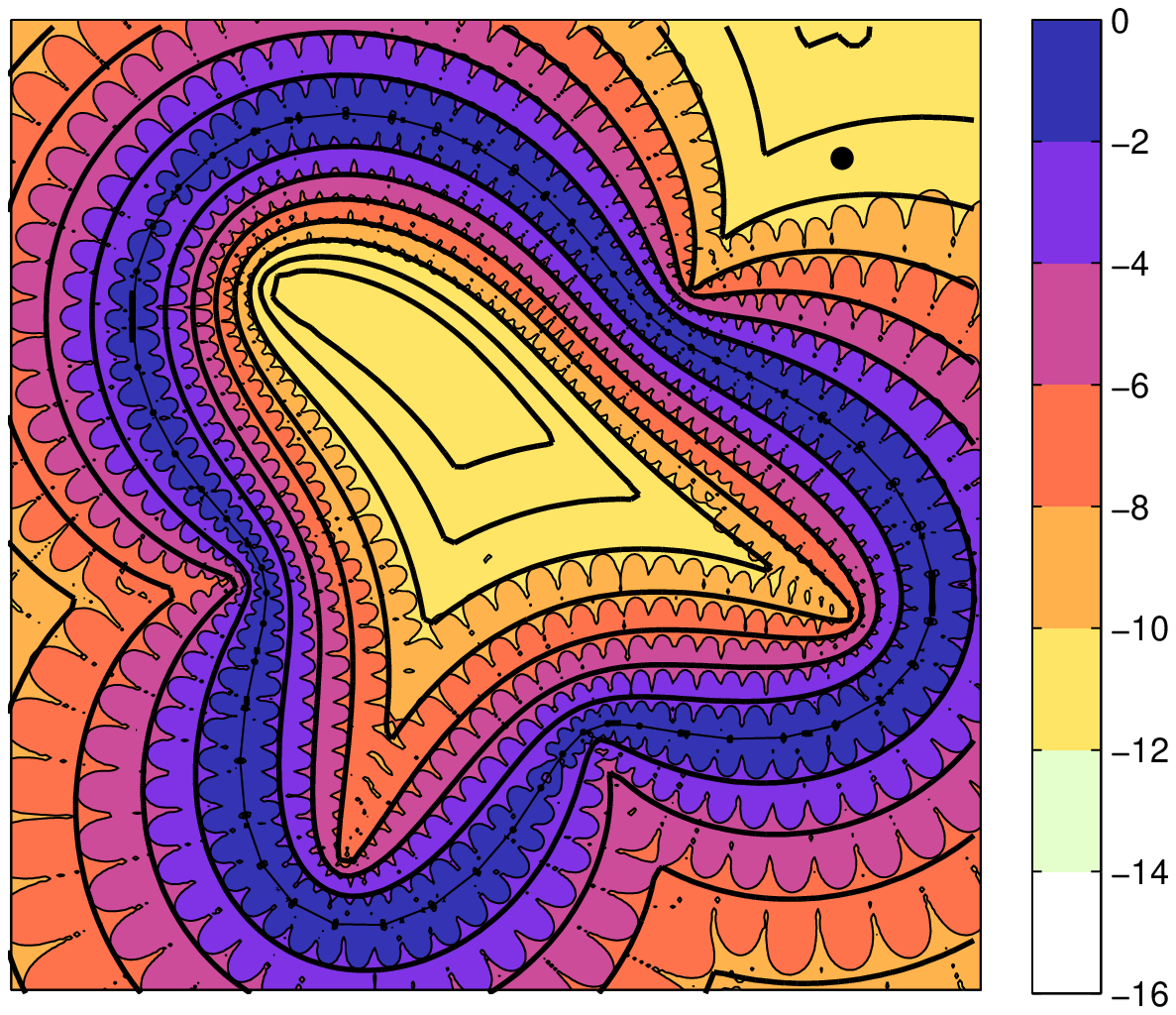}
\emp
\ca{Evaluation error for Laplace and Helmholtz layer potentials
using the native quadrature scheme described after \eqref{dlp}.
In all four cases there are $N=60$ nodes.
Error contours separated by a factor of $10^2$
are shown (thin black lines),
as are the contours (thick black lines) predicted by Theorem~\ref{t:convd}.
In (a), (b), and (d) the interior domain $\Omega$
has boundary given by the polar function
$r(\theta) = 1 + 0.3 \cos[3(\theta+ 0.3\sin\theta)]$.
(a) is a double-layer with $\tau\equiv 1$ and $\om=0$.
(b) $\tau$ is the Nystr\"om solution for Dirichlet
data corresponding to the potential $u(x,y) = xy$, with $\om=0$.
(c) same as (a) but $\pO$ has
a small Gaussian ``bump'' on its northwest side.
(d) Test of Green's representation formula \eqref{grf}, both inside and outside,
for $\om=2$ (roughly 0.8 wavelength across the diameter)
with data due to an exterior point source at the dot shown.
}{f:err}
\efi

There are several ways to overcome this problem near the boundary.
In order of increasing sophistication, these include:
i) increase $N$ in the Nystr\"om method (although
solving a linear system larger than one needs is clearly a
waste of resources);
%
ii) fix $N$, but then interpolate $\tau$ onto a finer fixed set of boundary
nodes, enabling 
points closer to $\pO$ to be accurately evaluated
\cite[Fig. 7.4]{atkinson}
(this is implemented in \cite{jeon98}---what is not
discussed is that the number of finer nodes must grow without limit
as $x \to \pO$);
%
iii) use an adaptive quadrature scheme for \eqref{dlp}
which is able to access the interpolant of $\tau$ \cite{gedney03}
(although this
achieves high accuracy for each point $x\in\Omega$, we have found it
very slow \cite[Sec.~5.1]{qplp}
because 
the adaptivity depends on $x$, with an arbitrarily large
number of refinements needed as $x\to\pO$);
iv) use various fixed-order methods based upon precomputed
quadratures \cite{mckenney} or grids \cite{mayo}
for the Laplace case;
or v) use high-order methods of Helsing--Ojala \cite{helsing_close}
for the Laplace case,
which approach machine precision accuracy
while maintaining efficiency within a fast multipole (FMM) accelerated
scheme.


The method of \cite{helsing_close}
has recently been extended to the Helmholtz equation in
two dimensions \cite{helsingtut}.
Here we present an alternative, simple, and efficient new method
that addresses the close-to-boundary quadrature problem, with numerical
effort independent of the distance of $x$ from $\pO$.
One advantage is that our scheme extends naturally to
the three-dimensional case, unlike existing high-order two-dimensional
schemes. Incidentally, our scheme equally well evaluates the potential {\em on
the curve} $\pO$ (i.e.\ the limit of $x$ approaching $\pO$ from one side),
meaning that it can also be used to construct high-order Nystr\"om quadratures
to solve the BIE \eqref{bie} itself;
the resulting tool is called QBX \cite{qbx}.

Firstly, in section \ref{s:err} we analyze (in Theorems~\ref{t:convd} and
\ref{t:convs}) the evaluation
error for Laplace double-layer and single-layer potentials,
with the global periodic trapezoid rule on an analytic curve $\pO$ with
analytic data---we are
surprised not to find these results in the literature.
This is crucial in order to determine the
neighborhood of $\pO$ in which native evaluation is poor;
it is within this ``bad neighborhood''
that the new close-evaluation scheme is used.
We present the scheme in section \ref{s:surr}:
simply put, the idea is to interpolate $\tau$ to a
fixed finer set of roughly $4N$ nodes, from which
one computes (via the addition theorem)
the coefficients of {\em local expansions}
(i.e., Taylor expansions in the Laplace case,
Fourier--Bessel expansions in the Helmholtz),
around a set of expansion centers placed near (but not too near) $\pO$.
It is these ``surrogate'' local expansions that are then evaluated at
nearby desired target points $x$.
This is reminiscent of the method of Schwab--Wendland \cite{schwabsurf}
but with the major difference that expansion centers lie off of, rather than
on, $\pO$.
In section \ref{s:ON} we show how to combine the new scheme
with the FMM to achieve an overall $O(N)$ complexity for the evaluation
of $O(N)$ target points lying in the bad neighborhood,
and apply this to high-frequency scattering from a smooth but complicated
obstacle 100 wavelengths across.
Finally, we conclude and mention future directions in section \ref{s:conc}.


\section{Theory of Laplace layer potential evaluation error using the
global trapezoid rule}
\label{s:err}

Our goal in this section is to analyze rigorously
the native (i.e., $N$-node) evaluation error for
analytic
single- and double-layer potentials for the Laplace equation ($\om=0$),
on analytic curves.
An example plot of such error varying over an interior domain
is shown in Fig.~\ref{f:err}(a).

\subsection{Geometric preliminaries}
\label{s:geom}

We identify $\RR$ with $\C$, and let the simple analytic closed curve 
$\pO$ define either 
a bounded interior, or unbounded exterior, open domain $\Omega\subset\C$.
We need $Z:\R \to \C$ as an analytic $2\pi$-periodic
counter-clockwise parametrization of
$\pO$, i.e.\ $Z([0,2\pi)) = \pO$.
This means $Z(s) = z_1(s) + iz_2(s)$, with $z_1$ and $z_2$ real analytic and
$2\pi$-periodic,
and that $Z$ may be continued as an analytic function
in some neighborhood of the real axis.
We assume that the {\em speed function} $|Z'(s)|$ is positive for all real $s$.
These conditions means that $Z$ is analytic and invertible
in a strip $|\im s|<\al$, for some $\al>0$.
The image of this strip under $Z$ defines an annular (tubular)
neighborhood of $\pO$ in which $Z^{-1}$ is also analytic.
Fig.~\ref{f:geom} shows such a strip and its image, both shaded in grey.
Also shown are the singularities that control its width:
at these points $Z'=0$, so that
locally $Z^{-1}$
takes the form of a (translated)
square-root map, hence must have branch cut
and cease to be single-valued.
These points are also singularities of the so-called {\em Schwarz function} of
the domain; see \cite[Ch.~5, 6, 8]{Da74} and \cite{bookrevshapiro}.
Other types of Schwarz singularities are possible
for domains with analytic boundaries (e.g.\ see \cite{mfs});
however, for our analysis the type of singularity is irrelevant.

For $\al\in\R$, we
will use the notation $\Gamma_\al$ to mean a translation of $\pO$
by $\al$ in the imaginary parameter direction,
$$
\Gamma_\al \;:=\; Z(\{s = t + i\al: t \in \R\})
~.
$$
In particular, $\Gamma_0 = \pO$.
Note that for all sufficiently small $\al$, $\Gamma_\al$ is a Jordan
curve, but that for larger $|\al|$, it will in general start to self-intersect.
This is illustrated by the images of the grid-lines in Fig.~\ref{f:geom}.
For $\al>0$ we define $A_\al$ by the open annular neighborhood
of $\pO$,
$$
A_\al \; := \; Z(\{s = t + ia: t,a \in \R, |a| < \al\})
~,
$$
i.e.\ the image of the strip $|\im s|<\al$.
Note that when $\Gamma_{-\al}$ and $\Gamma_\al$
do not intersect themselves or each other,
then $A_\al$ is simply the open region lying between them.

\bfi 
\hspace{.2in}
\ig{width=5in}{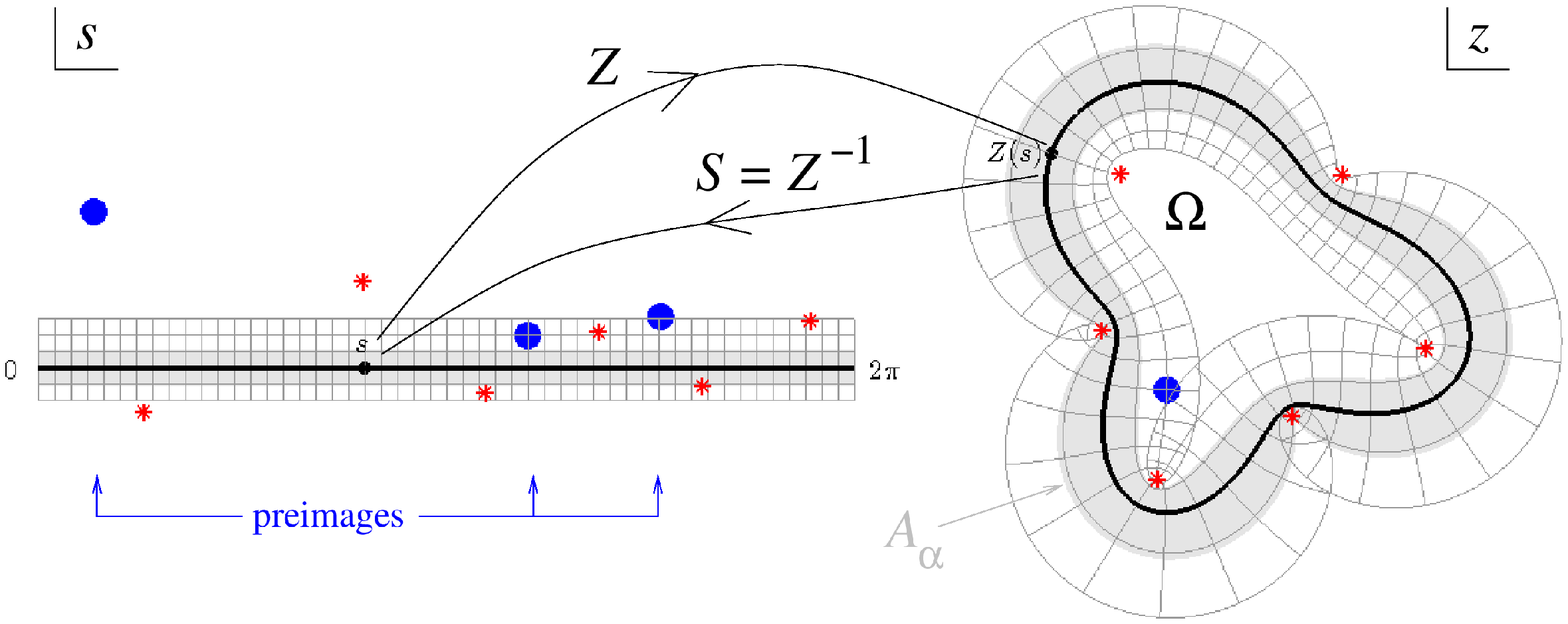}
\vspace{-2ex}
\ca{Parametrization of the boundary of the interior domain $\Omega$
from Fig.~\ref{f:err}, as a map
from $s$-plane (shown left with a square grid) to $z$-plane (shown right).
One point on the real $s$ axis and its image on the boundary are shown by small
black dots.
A single point in the interior and its three pre-images in the
$s$-plane are shown by large blue dots.
An annular neighborhood $A_\al$ in which $Z$ is analytic and invertible,
and its preimage, are shown in grey.
The nearest six (branch-type)
singularities of the Schwarz function of the domain,
and their pre-images, are shown by $\ast$.
}{f:geom}
\efi

\subsection{Evaluation error in the double-layer case}

Given a real-valued analytic density $\tau\in C(\pO)$,
the Laplace double-layer potential \eqref{dlp}
may be written $u = \re v$, where $v$ is the function defined by
the complex contour integral%
\footnote{Note that this Cauchy integral is not an example of
Cauchy's theorem, because $\tau$ is not the boundary value of
$v$. Rather, $\tau$ is purely real-valued.}
\be
v(z) \;=\; \frac{-1}{2\pi i} \int_\pO \frac{\tau(y)}{y-z} dy
~, \qquad z\in\CpO ~.
\label{vyd}
\ee
Thus $v$ is analytic in $\Omega$, and also in $\CO$.
Since the evaluation error of $u$ is bounded by that of $v$, we shall
work with $v$ from now on.
Let $\ttau$ be the pullback of $\tau$ under $Z$, i.e.\
$\ttau(s) = \tau(Z(s))$ for all $s\in\R$.
Rewriting \eqref{vyd} in terms of the parameter gives,
\be
v(z) \;=\;
\frac{-1}{2\pi i} \int_0^{2\pi} \frac{\ttau(s)}{Z(s) - z} Z'(s) ds ~,
\qquad z\in\CpO~.
\label{vd}
\ee
For quadrature of \eqref{vd}
we now choose the global periodic trapezoid rule along the real $s$ axis,
introducing nodes $2\pi j/N$,
$j=1,\dots, N$,
and equal weights $2\pi/N$, thus
\be
v^{(N)}(z) \;:=\;
\frac{-1}{iN} \sum_{j=1}^N \frac{\ttau(2\pi j/N)}{Z(2\pi j/N) - z}
Z'(2\pi j /N)~.
\label{vdN}
\ee



The integrand in \eqref{vd} is analytic, implying exponential convergence
of \eqref{vdN}
by the following classical
theorem \cite{davis59} (see e.g.\ \cite[Thm.~12.6]{LIE}).
\begin{thm}[Davis] 
Let $f$ be $2\pi$-periodic and analytic in the
strip $|\im s|\le \al$ for some $\al>0$, and let $|f|\le F$ in this strip.
Then the quadrature error of the periodic trapezoid rule,
\be
E_N \;:=\; \frac{2\pi}{N}\sum_{j=1}^N f(2\pi j/N) \; - \int_{0}^{2\pi} \! f(s) ds~,
\label{EN}
\ee
obeys the bound
\be
|E_N| \;\le\; \frac{4\pi F}{e^{\al N} - 1}
~.
\label{ENd}
\ee
\label{t:davis}
\end{thm}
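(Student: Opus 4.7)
The plan is to prove Theorem~\ref{t:davis} by the classical Fourier-series method, reducing the quadrature error to an exponentially convergent geometric sum whose rate is dictated by the strip width $\al$. Throughout I use only that $f$ is $2\pi$-periodic and analytic in $|\im s|\le\al$ with $|f|\le F$ there.

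First I would expand $f(s) = \sum_{k\in\mathbb{Z}} c_k e^{iks}$ with $c_k = \frac{1}{2\pi}\int_0^{2\pi} f(s)e^{-iks}\, ds$, and establish the crucial decay estimate
\be
|c_k| \;\le\; F e^{-\al|k|}, \qquad k\in\mathbb{Z}~.
\label{cbnd}
\ee
This is obtained by applying Cauchy's theorem on a rectangular contour of height $\al$: one shifts the horizontal segment to $\im s = -\al\,\mathrm{sgn}(k)$ so that the oscillatory factor $e^{-iks}$ acquires modulus $e^{-\al|k|}$, while the two vertical sides of the rectangle cancel by $2\pi$-periodicity. Boundedness of $f$ on the shifted line then yields \eqref{cbnd}.

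Next, I would express both terms of $E_N$ in terms of the $c_k$. The exact integral contributes $2\pi c_0$. For the trapezoidal sum, substituting the Fourier series and using the character-sum identity $\sum_{j=1}^N e^{2\pi ikj/N} = N\,\mathbf{1}_{N\mid k}$ gives the aliasing formula
\be
\frac{2\pi}{N}\sum_{j=1}^N f(2\pi j/N) \;=\; 2\pi \sum_{m\in\mathbb{Z}} c_{mN}~,
\ee
with the exchange of summations justified by the uniform convergence supplied by \eqref{cbnd}. Subtracting $2\pi c_0$ leaves $E_N = 2\pi\sum_{m\ne 0} c_{mN}$, whence \eqref{cbnd} and a geometric series produce
\be
|E_N| \;\le\; 2\pi F \sum_{m\ne 0} e^{-\al|m|N} \;=\; \frac{4\pi F e^{-\al N}}{1-e^{-\al N}} \;=\; \frac{4\pi F}{e^{\al N}-1}~,
\ee
which is the claimed bound \eqref{ENd}.

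No step is genuinely hard; the only point requiring care is the contour shift establishing \eqref{cbnd}, where one must explicitly use periodicity to dispose of the vertical sides of the shifted rectangle (otherwise one picks up uncontrolled boundary terms). All subsequent manipulations—the character-sum orthogonality, the telescoping to $\sum_{m\ne 0}c_{mN}$, and summing the resulting geometric series—are purely mechanical once aliasing is recognized.
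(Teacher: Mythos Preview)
Your proof is correct. The Fourier--aliasing argument you give is one of the two standard routes to the Davis bound, and every step (contour shift for coefficient decay, character-sum orthogonality, geometric summation) is sound.

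The paper, however, does not prove Theorem~\ref{t:davis} directly; it cites Davis and Kress. The implicit proof in the paper is the one given for the more general Lemma~\ref{l:davispole} (which reduces to Theorem~\ref{t:davis} when $r_0=0$): there one applies the residue theorem to $\cot(Ns/2)\,f(s)$ on the closed strip, so that the trapezoid nodes arise as the poles of the cotangent and the integrals along $\im s=\pm\al$ are controlled by the bound $\bigl|\tfrac{i}{2}\cot(Ns/2)\mp\tfrac12\bigr|\le(e^{\al N}-1)^{-1}$. Your Fourier approach is arguably more transparent for the basic analytic case and makes the aliasing mechanism explicit. The cotangent--residue approach, on the other hand, is what the paper actually needs: it localizes the error as a contour integral and therefore generalizes cleanly to integrands with a simple pole (Lemma~\ref{l:davispole}) or a branch cut (Lemma~\ref{l:davisbranch}) inside the strip, which is precisely how Theorems~\ref{t:convd} and~\ref{t:convs} are obtained. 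Your method would require separately tracking the Fourier coefficients of the singular part, which is less direct for those extensions.
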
 

However, to achieve (rather than merely approach) the correct
convergence rate, we will need the following generalization
(similar to that of Hunter \cite{hunter71}):
\begin{lem}  
Let $f$ be $2\pi$-periodic and
meromorphic in the strip $|\im s|\le \al$ for some $\al>0$,
with only one simple pole in this strip, at $s_0$, with $\im s_0 \neq 0$.
Let $f$ have residue $r_0$ at this pole, and
let $|f|\le F$ on the edges of the strip, i.e. for all $s$ with
$|\im s|=\al$.
Then the quadrature error \eqref{EN}
obeys the bound
\be
|E_N| \;\le\; \frac{2\pi |r_0|}{e^{|\im s_0|N}-1} + \frac{4\pi F}{e^{\al N} - 1}
~.
\label{ENp}
\ee
\label{l:davispole}
\end{lem}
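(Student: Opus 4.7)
I would extend the classical contour-integral proof of Davis's theorem (Theorem~\ref{t:davis}) by picking up the residue contribution from the extra pole $s_0$ of $f$, and arranging for the apparently large leading pieces to cancel and leave only exponentially small remainders. The starting identity is a residue representation of the trapezoid rule. Let $C$ be the rectangle with vertices $\pm i\alpha$ and $2\pi\pm i\alpha$, traversed counterclockwise. Inside $C$, the integrand $f(s)\cot(Ns/2)$ is meromorphic with simple poles at the $N$ quadrature nodes $s=2\pi j/N$ (each with residue $2/N$ from $\cot$) and at $s_0$ (with residue $r_0\cot(Ns_0/2)$). Combining the residue theorem with the cancellation of the two vertical sides of $C$ by $2\pi$-periodicity gives
\be
\frac{2\pi}{N}\sum_{j=1}^N f(2\pi j/N) \;=\; \frac{1}{2i}\oint_C f(s)\cot(Ns/2)\,ds \;-\; \pi r_0\cot(Ns_0/2).
\ee

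To extract the true integral $\int_0^{2\pi}f(s)\,ds$ from the contour integral, I would use the split $\cot(Ns/2)=i+\phi_-(s)$ on the bottom edge $\im s=-\alpha$ and $\cot(Ns/2)=-i+\phi_+(s)$ on the top edge $\im s=\alpha$, with $|\phi_\pm|\le 2/(e^{\alpha N}-1)$ (immediate from $\cot x=i+2i/(e^{2ix}-1)$). The $\pm i$ pieces assemble into the average $\tfrac12\bigl[\int_0^{2\pi}f(t-i\alpha)\,dt+\int_0^{2\pi}f(t+i\alpha)\,dt\bigr]$, while the $\phi_\pm$ pieces form a remainder $R$ satisfying $|R|\le 4\pi F/(e^{\alpha N}-1)$ by the edge hypothesis $|f|\le F$. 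A thin-rectangle residue argument (with vertical sides again cancelling by periodicity) then shifts each displaced integral back to the real axis: exactly one of them (the top if $\im s_0>0$, the bottom if $\im s_0<0$) picks up a jump $-2\pi ir_0\,\mathrm{sgn}(\im s_0)$, so the averaged pair equals $\int_0^{2\pi}f(s)\,ds - \pi ir_0\,\mathrm{sgn}(\im s_0)$.

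The final step is the cancellation. Expanding $\cot(Ns_0/2)=-i\,\mathrm{sgn}(\im s_0)+\psi$ with $|\psi|\le 2/(e^{|\im s_0|N}-1)$, the leading piece of $-\pi r_0\cot(Ns_0/2)$ from the starting identity is $+\pi ir_0\,\mathrm{sgn}(\im s_0)$, which exactly annihilates the leading piece $-\pi ir_0\,\mathrm{sgn}(\im s_0)$ from the averaged-pair computation. What survives is $R$ together with a pole remainder $-\pi r_0\psi$ of magnitude at most $2\pi|r_0|/(e^{|\im s_0|N}-1)$; subtracting $\int_0^{2\pi}f(s)\,ds$ and applying the triangle inequality yields \eqref{ENp}. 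The main obstacle is precisely this sign bookkeeping: two separate $O(1)$ pieces---one from the shifted-contour residue, the other from the leading constant in $\cot(Ns_0/2)$---must annihilate exactly, and it is this cancellation that is responsible for the constant in front of $|r_0|$ being $2\pi$ rather than a naive $4\pi$.
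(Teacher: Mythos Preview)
Your proof is correct and is essentially the same argument as the paper's: both use the residue theorem for $f(s)\cot(Ns/2)$ to represent the trapezoid sum, shift the integral of $f$ to the strip edges picking up the pole residue, and exploit that $\cot(Ns/2)\mp i$ is exponentially small on the appropriate edge and at $s_0$. The only difference is packaging: the paper combines the two residue computations into a single compact identity
\[
E_N=\int_{\Gamma_1}\Bigl(\tfrac{i}{2}\cot\tfrac{Ns}{2}-\tfrac12\Bigr)f\,ds-\int_{\Gamma_2}\Bigl(\tfrac{i}{2}\cot\tfrac{Ns}{2}+\tfrac12\Bigr)f\,ds+2\pi i r_0\Bigl(\tfrac{i}{2}\cot\tfrac{Ns_0}{2}\mp\tfrac12\Bigr),
\]
so that each bracketed factor is already $O\bigl((e^{\alpha N}-1)^{-1}\bigr)$ or $O\bigl((e^{|\im s_0|N}-1)^{-1}\bigr)$ and the sign cancellation you carefully track is absorbed into the $\mp\tfrac12$ from the outset.
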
   
Note that the first term dominates as $N$ grows,
and that $r_0=0$ recovers the Davis theorem.
\begin{proof}
Let $\Gamma_1$ and $\Gamma_2$ be the upper and lower strip boundaries
respectively, both traversed with increasing real part.
For the sum in \eqref{EN}
we apply the residue theorem to $\cot\frac{Ns}{2} f(s)$
in the strip, noticing that the vertical sides
cancel due to periodicity.
For the integral in \eqref{EN}
we apply the residue theorem to $f(s)$ in each of
the upper and lower semi-strips, take their average.
Combining these, \eqref{EN} can be rewritten
$$
E_N = \int_{\Gamma_1}
\left(\frac{i}{2}\cot \frac{Ns}{2} - \frac{1}{2}\right) f(s) ds -
\int_{\Gamma_2}
\left(\frac{i}{2}\cot \frac{Ns}{2} + \frac{1}{2}\right) f(s) ds
+ 2\pi i r_0 \left(\frac{i}{2}\cot\frac{Ns_0}{2} \mp \frac{1}{2}\right)
~,
$$
with the choice of sign in the last term corresponding to
the cases where $\im s_0$ has sign $\pm$.
The first bracketed term is bounded in size by $(e^{\al N}-1)^{-1}$
since $\im s = \al$. The same is true for the second bracketed term
since $\im s = -\al$.
The third bracketed term is bounded by $(e^{|\im s_0| N}-1)^{-1}$.
Combining these estimates and the bound on $f$ on the strip boundary
completes the proof.
\end{proof}

Now, since $\tau$ is real analytic on $\pO$,
it may be continued as a bounded holomorphic function in
the closure of some annular neighborhood $A_\al$.
Let us choose $\al>0$ so that $Z$ is also analytic and invertible
in the closure of $A_\al$ as discussed in section~\ref{s:geom}.
This is sufficient for
the pullback $\ttau$ to be bounded and holomorphic in the closed $s$-plane
strip of half-width $\al$.
%
Consider a target evaluation point $z \in A_\al$,
which then has a unique preimage $s=Z^{-1}(z)$ with $|\im s|<\al$.
Recalling the native evaluation \eqref{vdN}, define the error function
\be
\eps_N := \re (\vN - v)
~.
\label{ENfun}
\ee
Applying Lemma~\ref{l:davispole} in the strip $|\im s|<\al$,
noticing that the residue of the integrand in \eqref{vd} is just $\ttau(s)$,
and bounding the dominant first term in \eqref{ENp}
by a simple exponential, we have shown:
\begin{thm}  
Let $Z$ be the conformal map and $\tau$ the density
function defined at the beginning of this section.
Let $A_\al$, $\al>0$, be an
annular neighborhood in the closure of which $\tau$ is holomorphic and
bounded, and $Z^{-1}$ is holomorphic.
Then at each target point $z\in A_\al\backslash\pO$
we have exponential convergence of the error of the
Laplace double-layer potential evaluated with the $N$-point trapezoid rule
in the $s$ variable.
That is, there exist constants $C$ and $N_0$ such that
\be
|\eps_N(z)| \; \le \; C e^{-|\im s| N}
\qquad \mbox{ for all $N\ge N_0$ }
\label{eNd}
\ee
where $Z(s) = z$.
The constant $C$
may be chosen to be any number greater than
$|\ttau(s)| = |\tau(z)|$.
\label{t:convd}
\end{thm}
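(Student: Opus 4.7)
The plan is to recognize the integrand of \eqref{vd} as a meromorphic function of the parameter in the closed strip $|\im s|\le\al$ with exactly one simple pole, and then invoke Lemma~\ref{l:davispole} directly. Define $g(\sigma) := \frac{-1}{2\pi i}\,\frac{\ttau(\sigma)Z'(\sigma)}{Z(\sigma)-z}$, so that $v(z) = \int_0^{2\pi}g(\sigma)\,d\sigma$ and $\vN(z)=(2\pi/N)\sum_{j=1}^N g(2\pi j/N)$. First I would check the hypotheses of the lemma: $\ttau$ and $Z'$ are holomorphic and bounded on the closed strip by the choice of $\al$; and for a target $z\in A_\al\setminus\pO$, the equation $Z(\sigma)=z$ has a \emph{unique} solution $\sigma=s$ in the strip, with $0<|\im s|<\al$, because $Z$ is invertible on $A_\al$. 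Since $Z'(s)\ne 0$ (again by invertibility), this zero of $Z(\sigma)-z$ is simple, so $g$ has exactly one simple pole, located at $\sigma=s$.

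Next, an elementary residue computation (using $Z(\sigma)-z = Z'(s)(\sigma-s)+O((\sigma-s)^2)$ near $s$) gives
\[
r_0 \;=\; \frac{-1}{2\pi i}\cdot\frac{\ttau(s)Z'(s)}{Z'(s)} \;=\; \frac{-\ttau(s)}{2\pi i},
\]
so $|r_0| = |\ttau(s)|/(2\pi) = |\tau(z)|/(2\pi)$. Boundedness of $g$ on the edges $|\im\sigma|=\al$ follows from continuity plus the fact that $Z(\sigma)-z$ is bounded away from zero there (since $s$ was the \emph{only} pre-image of $z$ in the closed strip); call the resulting bound $F$. Matching conventions, $\vN(z)-v(z)$ is precisely the $E_N$ of \eqref{EN} applied to $g$, so Lemma~\ref{l:davispole} yields
\[
|\eps_N(z)| \;=\; |\re(\vN(z)-v(z))| \;\le\; \frac{|\tau(z)|}{e^{|\im s|N}-1} \,+\, \frac{4\pi F}{e^{\al N}-1}.
\]

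Finally I would convert this bound into the advertised form $Ce^{-|\im s|N}$. The first term equals $|\tau(z)|\,e^{-|\im s|N}/(1-e^{-|\im s|N})$, whose prefactor tends to $|\tau(z)|$ as $N\to\infty$; the second is $O(e^{-\al N})$, which decays strictly faster than $e^{-|\im s|N}$ since $|\im s|<\al$. Hence for any prescribed $C>|\tau(z)|$, one may choose $N_0$ large enough that the sum of both terms is at most $Ce^{-|\im s|N}$ for every $N\ge N_0$, which is \eqref{eNd}. The main, and really only, obstacle is verifying that $s$ is the unique simple pole of $g$ in the strip, and this is essentially handed to us by the hypothesis that $Z^{-1}$ is holomorphic on the closure of $A_\al$; the remainder of the argument is residue arithmetic and algebraic manipulation of the two exponentials in \eqref{ENp}.
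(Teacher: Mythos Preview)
Your proof is correct and follows essentially the same approach as the paper: apply Lemma~\ref{l:davispole} to the integrand of \eqref{vd}, compute the residue at the unique pole $s=Z^{-1}(z)$ to be (up to the $-1/2\pi i$ prefactor) $\ttau(s)$, and then absorb the subdominant $e^{-\al N}$ term into the leading $e^{-|\im s|N}$ term. You are somewhat more explicit than the paper in verifying that $g$ is bounded on the strip edges and in carrying out the final conversion to the form $Ce^{-|\im s|N}$, but the argument is the same.
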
   
%
To summarize:
convergence is exponential with rate given by the imaginary part of
the preimage of the target point under the complexification of the
boundary parametrization.

\begin{rmk}
It is possible to choose constants $C$ and $M$ for
which \eqref{eNd} holds uniformly in any {\em compact} subset of
$A_\al\backslash\pO$, but this is impossible over the
entire set $A_\al\backslash\pO$ because the value of $N$ at which
exponential convergence sets in diverges as $1/|\im s|$
as one approaches $\pO$.
Intuitively, this failure occurs because, no matter how large $N$ is,
individual quadrature points are always ``visible'' from close enough to
the boundary.
However, the constant $C$ may be chosen uniformly on the entire set to be
any number greater than $\sup_{z\in A_\al}|\tau(z)|$.
\end{rmk}

In Fig.~\ref{f:err} (a) we plot contours of constant $|\eps_N(z)|$
as the target point $z$ is varied over
a nonsymmetric interior domain $\Omega$ with analytic boundary,
for fixed $N$, and the simplest case $\tau\equiv1$ which
generates the potential $u \equiv -1$ in $\Omega$.
We overlay (as darker curves) predicted contours using Theorem~\ref{t:convd}.
In the annular neighborhood $A_\al$ (shown in grey in Fig.~\ref{f:geom})
the predicted contour for an error level $\eps$ is the curve
$\Gamma_{-\log(\eps/C)/N}$, where the lower bound $C = 1$ was used.
The match between the light and dark contours is almost perfect
(apart from periodic ``scalloping'' due to oscillation in the
error at the node frequency).

\begin{rmk}
Note that it would be possible to use the properties of the cotangent function
to improve Theorem~\ref{t:convd}
to include a {\em lower} bound on $|\vN(z)-v(z)|$
asymptotically approaching the upper bound. We have not pursued this, since
after taking the real part the error has no lower bound;
rather, it oscillates in sign at the node frequency
as shown by the ``fingers'' in Fig.~\ref{f:err} (a).
\end{rmk}

What happens further into the domain $\Omega$, i.e.\
for $\al$ values larger than that for which $Z$ is invertible?
Here, since $Z^{-1}$ starts to become multi-valued,
there are multiple preimages 
which lie within a given
strip $|\im s|\le \al$ (e.g.\ see large dots in Fig.~\ref{f:geom}).
To analyze this would
require a variant of Lemma~\ref{l:davispole} with multiple poles.
We prefer an intuitive explanation.
Let us assume that $\ttau$ remains holomorphic throughout such a wider
strip.
Then it is clear that
the $s$-plane pole {\em closest to the real axis} will dominate
the error for sufficiently large $N$
because it creates the slowest exponential decay rate.
Hence, to generate each predicted contour in Fig.~\ref{f:err}
we use the set of $z$-plane
points which have their closest preimage a distance
$\al = -\log(\eps/C)/N$ from the real axis.
For each $\al$, this curve is simply the boundary of $A_\al$,
%
%
i.e.\
the self-intersecting curve $\Gamma_\al$ with all its ``loops trimmed off.''
We see in Fig.~\ref{f:err}(a) that, throughout the interior of $\Omega$,
this leads to excellent prediction of the error
down to at least 14 digits of accuracy.
Even features such as the cusps which occur beyond the two
closest interior Schwarz singularities (at roughly 4 o'clock and 7 o'clock) are
as predicted.

Eventually, for a target point deep inside the domain, all of its preimages
may be further from the real axis than the widest strip in which
$\ttau$ is holomorphic. In this case, one is able to apply
only
the Davis theorem, and the width of the strip in which $\ttau$ is holomorphic
will now control the error (this case is never reached in Fig.~\ref{f:err}(a),
although it will be in (d)).

We now perform some instructive variants on this numerical experiment.
In Fig.~\ref{f:err}(b) we use the same domain and $N$ as in (a),
but instead of using a given $\tau$, we solve for $\tau$ via
\eqref{bie} with the $N$-point Nystr\"om method,
given (entire) Dirichlet data $u(x,y) = xy$.
This is a typical BVP setting, albeit a simple one.
We see that the errors are similar to (a) with the major difference that
the errors bottom out at around $10^{-9}$: this is because $\tau$ itself
only has this accuracy for the $N=60$ nodes used
($N\ge 130$ recovers full machine precision in $\tau$).

In (c) we repeat (a) except using a boundary shape $\pO$ distorted by
a localized Gaussian ``bump'' at around 11 o'clock. The
errors are now never smaller
than $10^{-8}$: note that since $\tau\equiv 1$ this
cannot be due to inaccuracy, nor to lack of sufficient analyticity, in $\tau$.
Rather, the mechanism is the rapid 
growth in distance from $\pO$ of
the contours $\Gamma_\al$ in this region, as $\al$ increases.
This is verified by the quite good agreement with predicted contours.
We observe that such growth is typical in a region with rapidly-changing
curvature, which explains the well-known empirical rule that,
for high accuracy,
$N$ must be chosen large enough to resolve such spatial features.

To remind the reader that Theorem~\ref{t:convd} predicts errors just as well
in the exterior as in the interior, we suggest a glance at
Fig.~\ref{f:err}(d), to be discussed more later.
We conclude with a remark about the universality of the
``safe'' distance 
from the boundary for accurate evaluation.

\begin{rmk}[``$5h$ rule'']
In practical settings, if
the evaluation point is a distance $5h$ or more from the boundary,
where $h$ is the local spacing between the nodes $Z(2\pi j/N)$,
then around 14 digits of accuracy in $u^{(N)}$ is typical.
This is because, when the local distortion induced by the
conformal map $Z$ is small, the preimage is then a distance roughly
$5\cdot 2\pi/N$ from the real axis,
giving the term $e^{-5\cdot 2\pi} \approx 2\times 10^{-14}$ in \eqref{eNd}.
This relies on two assumptions:
i) $\tau$ is analytic and bounded in an annular
neighborhood of sufficient width,
and ii) the local distortion is small on a spatial scale of a few times $h$.
Why should these hold in practice?
The answer is that they are preconditions for the Nystr\"om method
to produce a highly accurate solution density $\tau$ in the first place.
\label{r:5h}
\end{rmk} 

Finally, we note that if the periodic trapezoid rule
were to be replaced by a panel-based quadrature formula with Chebyshev
node density, such as Gauss--Legendre,
a similar analysis to the above would show that the contours
of error level are the images under $Z$ of the Bernstein
ellipses \cite{davisrabin,ATAP} for the panel intervals.

\subsection{Single-layer case}
\label{s:slp}

\bfi 
\centering\ig{width=5in}{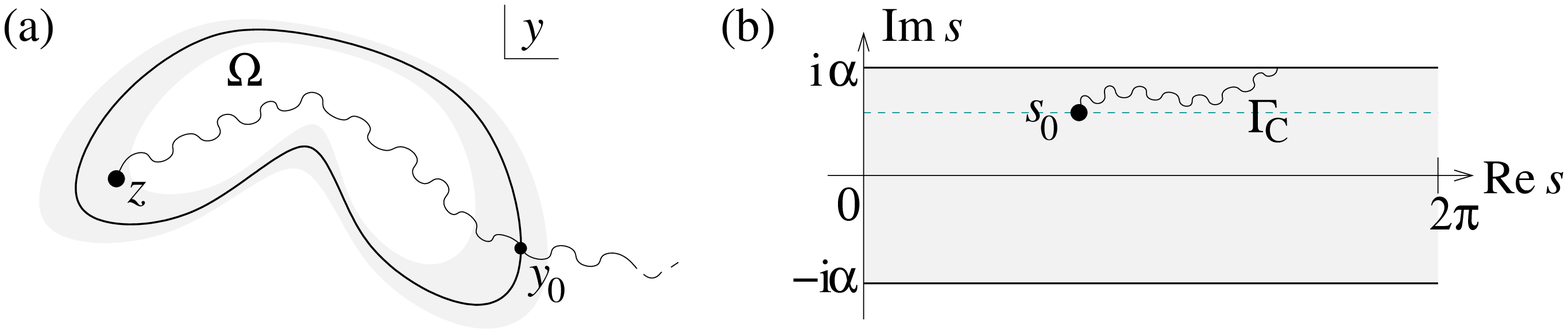}
\ca{(a) A branch cut in the complex $y$ plane
for the function $\log 1/(y-z)$ in \eqref{vs} for $z\in\Omega$, with
exit point $y_0\in\pO$ fixed; see Remark~\ref{r:branch}.
(b) The analytic strip (excluding branch cut $\Gamma_C$) for the integrand
in Lemma~\ref{l:davisbranch}.
}{f:branchcuts}
\efi

We now present a similar analysis for single-layer potential
evaluation, including a numerical verification.
Recalling the fundamental solution \eqref{Phi}, the single-layer potential
is
\be
u(x) = \int_\pO \Phi(x,y) \sigma(y) ds_y ~,  \qquad x\in\CpO~.
\label{slp}
\ee
For the proof in the Laplace case ($\om=0$), we need the analytic
function of which this is the real part.
A real-valued analytic density $\sigma\in C(\pO)$
generates a potential $u=\re v$, where, analogously to \eqref{vyd} and
\eqref{vd},
\be
v(z) \;=\;
\frac{1}{2\pi} \int_\pO \biggl(\log\frac{1}{y-z}\biggr)
\sigma(y) \,|dy|
\;=\;
\frac{1}{2\pi} \int_0^{2\pi} \biggl(\log\frac{1}{Z(s) - z} \biggr)
\tsig(s) \,|Z'(s)|\, ds ~,
\quad z\in\CpO~,
\label{vs}
\ee
and $\tsig(s) := \sigma(Z(s))$ is the pullback.
Note that now the magnitude of $dy$ rather than its complex value is taken.
The multiple sheets of the imaginary part of the logarithm cause the following complication.

\begin{rmk} 
Without a careful choice of the branch cuts of the kernel
$L(y,z) := \log 1/(y-z)$ in \eqref{vs},
$\im v$ would fail to be a harmonic conjugate of $u$, and
$v$ would not be holomorphic.
However, it is easy to check that
sufficient conditions are as follows, which we will from now assume
apply in the definition \eqref{vs}.
For the case $z\in\Omega$, the branch cut of $L(\cdot,z)$
must exit $\Omega$ only at a single point $y_0\in\pO$ independent of $z$
(see Fig.~\ref{f:branchcuts}(a)),
and, for each fixed $y\in\pO$, $y\ne y_0$, $L(y,\cdot)$ is continuous
in $\Omega$.
For the case $z\in\C\backslash\overline{\Omega}$,
the branch cut in $L(\cdot,z)$ must avoid $\pO$,
and, for each $y\in\pO$, the branch cut in $L(y,\cdot)$ passing to infinity
must avoid $\Omega$.
\footnote{Note that, unless $\int_\pO \sigma(y) |dy| = 0$,
i.e.\ total charge vanishes,
then $\im v$ itself cannot be single-valued outside $\Omega$.}
\label{r:branch}\end{rmk} 

As in the double-layer case, we will assume that
$\tsig$ continues to a function analytic in some
strip $|\im s|\le \al$ in which $Z$ is analytic and invertible.
The following is a variation on Lemma~\ref{l:davispole}.

\begin{lem}  
Let $f$ be $2\pi$-periodic and analytic
everywhere in the strip $|\im s|\le \al$ apart from on a
branch cut $\Gamma_C$ which starts from the point $s_0$, with $\im s_0 \neq 0$,
then proceeds to the nearer edge of the strip
while avoiding the region $|\im s|\le|\im s_0|$ (see Figure~\ref{f:branchcuts}(b)).
Let $|f|\le F$ on the edges of the strip.
Then the quadrature error \eqref{EN}
obeys the bound
\be
|E_N| \;\le\;
\frac{1}{e^{|\im s_0|N}-1} \int_{\Gamma_C} |f^+(s)-f^-(s)|\, |ds| \; +\;
\frac{4\pi F}{e^{\al N} - 1}
~,
\label{ENb}
\ee
where $f^+$ and $f^-$ are the limiting values on
either side of the branch cut.
\label{l:davisbranch}
\end{lem}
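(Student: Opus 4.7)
My plan is to adapt the contour-deformation argument from the proof of Lemma~\ref{l:davispole}, replacing the isolated-pole residue contribution at $s_0$ by an integral of the jump $f^+-f^-$ along the branch cut $\Gamma_C$.

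First I would apply the residue theorem to $f(s)\cdot \tfrac{i}{2}\cot(Ns/2)$ in the strip $|\im s|\le \al$, using a contour that hugs both sides of $\Gamma_C$. Since the cotangent weight has simple poles of residue $1/N$ at the quadrature nodes and is analytic elsewhere in the strip, and $f$ is analytic off of $\Gamma_C$, this expresses the trapezoid sum in terms of integrals along $\Gamma_1$, $\Gamma_2$, and the two sides of $\Gamma_C$ (with the vertical sides cancelling by periodicity). I would then apply the residue theorem to $\pm\tfrac{1}{2}f(s)$ in the upper and lower semi-strips (each again deformed around $\Gamma_C$ when relevant) and average, so that $\int_0^{2\pi} f(s)\,ds$ is expressed on the same edge and cut contours. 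Subtracting the two identities gives
\begin{align*}
E_N \;=\;& \int_{\Gamma_1}\!\! \left(\tfrac{i}{2}\cot\tfrac{Ns}{2} - \tfrac{1}{2}\right) f(s)\,ds \;-\; \int_{\Gamma_2}\!\!\left(\tfrac{i}{2}\cot\tfrac{Ns}{2} + \tfrac{1}{2}\right) f(s)\,ds \\
&\;+\; \int_{\Gamma_C}\!\!\left(\tfrac{i}{2}\cot\tfrac{Ns}{2}\mp \tfrac{1}{2}\right) \bigl(f^+(s) - f^-(s)\bigr)\,ds,
\end{align*}
with the sign in the final term determined by which half-strip contains $\Gamma_C$.

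The remaining step is the same elementary estimate used in Lemma~\ref{l:davispole}: both $\bigl|\tfrac{i}{2}\cot(Ns/2)-\tfrac{1}{2}\bigr|$ (for $\im s>0$) and $\bigl|\tfrac{i}{2}\cot(Ns/2)+\tfrac{1}{2}\bigr|$ (for $\im s<0$) are bounded by $(e^{|\im s|N}-1)^{-1}$. Combined with $|f|\le F$, the $\Gamma_1$ and $\Gamma_2$ integrals give the $4\pi F/(e^{\al N}-1)$ contribution, while on $\Gamma_C$ the geometric hypothesis $|\im s|\ge|\im s_0|$ pulls the factor $(e^{|\im s_0|N}-1)^{-1}$ out of the cut integral, yielding the bound \eqref{ENb}.

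The main obstacle I expect is the careful justification of the contour deformation around $\Gamma_C$: one has to argue that the small arc around the branch point $s_0$ contributes nothing in the limit of shrinking radius, and that the two parallel segments hugging the cut legitimately combine into the single integral of the jump $f^+-f^-$. In the intended application to \eqref{vs} the branch-point singularity is merely logarithmic and hence integrable, so the tip contribution vanishes; and the assumption that $\Gamma_C$ stays out of the region $|\im s|<|\im s_0|$ is precisely what allows the cotangent weight to be pulled outside as the uniform factor $(e^{|\im s_0|N}-1)^{-1}$. Once these geometric details are handled, the remainder of the proof is a direct transcription of the argument for Lemma~\ref{l:davispole}.
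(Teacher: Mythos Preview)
Your proposal is correct and follows essentially the same approach as the paper: deform the contours from the proof of Lemma~\ref{l:davispole} so that they hug both sides of $\Gamma_C$, arriving at the identical expression for $E_N$ with the jump integral replacing the residue term, and then bound each piece via the same cotangent estimate. The paper's proof is terser and does not explicitly discuss the vanishing tip contribution at $s_0$, but your added remarks on that point are appropriate and do not change the argument.
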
   
\begin{proof}
As in the proof of Lemma~\ref{l:davispole}, we use the residue 
and Cauchy theorems to write $E_N$ as
$$
\int_{\Gamma_1}
\left(\frac{i}{2}\cot \frac{Ns}{2} - \frac{1}{2}\right) f(s) ds -
\int_{\Gamma_2}
\left(\frac{i}{2}\cot \frac{Ns}{2} + \frac{1}{2}\right) f(s) ds
+ \int_{\Gamma_C} \left(\frac{i}{2}\cot\frac{Ns}{2} \mp \frac{1}{2}\right)
[f^+(s)-f^-(s)]ds
~,
$$
taking care to include the traversal of both sides of $\Gamma_C$ in
the relevant contours.
As before, the first two integrals may be bounded to give the
second term in \eqref{ENb}.
The last integral may be bounded by the first term in \eqref{ENb}.
\end{proof}

We now apply this estimate to the native evaluation error for
the single-layer potential \eqref{slp}
and find exponential convergence with the same rate as for the double-layer.

\begin{thm}  
Let $A_\al$, $\al>0$, be an 
annular neighborhood in the closure of which $\sigma$ is holomorphic and
bounded, and $Z^{-1}$ is holomorphic.
Let $\eps_N$ be the evaluation error function of the Laplace
single-layer potential (the real part of \eqref{vs})
with the $N$-point periodic trapezoid rule in the $s$ variable.
Then at each target point $z\in A_\al\backslash\pO$, there exist constants
$C$ and $N_0$ such that
$$
|\eps_N(z)| \; \le \; C e^{-|\im s| N}
\qquad \mbox{ for all } N\ge N_0
$$
where $Z(s)=z$.
\label{t:convs}
\end{thm}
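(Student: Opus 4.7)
The plan is to mirror the proof of Theorem~\ref{t:convd}, replacing Lemma~\ref{l:davispole} with the branch-cut variant Lemma~\ref{l:davisbranch}. Writing $u = \re v$ with $v$ as in \eqref{vs}, I apply the $N$-point periodic trapezoid rule in $s$. The integrand
$$
f(s) \;=\; \frac{1}{2\pi}\Bigl(\log\frac{1}{Z(s)-z}\Bigr)\tsig(s)\,|Z'(s)|
$$
is $2\pi$-periodic and, away from the branch cut of the logarithm, holomorphic in the closed strip $|\im s|\le\al$, since $\tsig$, $Z$ and $|Z'|^2 = Z'\overline{Z'}$ (here I really need $Z'$ analytic, which we have, and $|Z'|\,ds = |dZ|$ the arc length element, which extends holomorphically via the analytic square root of $(Z')^2$ after fixing a sign consistent with arc length on $\R$) are all holomorphic there. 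The only singularity inside the strip is the logarithmic branch point at $s_0 := Z^{-1}(z)$, which is unique by invertibility of $Z$ in $A_\al$.

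Next I fix the branch cut. By Remark~\ref{r:branch} the $y$-plane cut of $\log 1/(y-z)$ may be chosen as any simple arc from $z$ to a point $y_0\in\pO$ (in the interior case) or out to infinity avoiding $\Omega$ (in the exterior case). I pick this arc so that its preimage $\Gamma_C := Z^{-1}(\text{cut})$ in the $s$-strip starts at $s_0$, proceeds to the nearer horizontal edge of the strip (via $Z^{-1}(y_0)$ on the real axis, if $y_0\in\pO$) and avoids the horizontal sub-strip $|\im s|\le|\im s_0|$; this is always possible since $Z^{-1}$ is analytic on $A_\al$ and we have freedom to route the $y$-plane arc away from a neighborhood of $\pO$ near $Z(s_0)$. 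The hypotheses of Lemma~\ref{l:davisbranch} are then met.

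The jump of $\log 1/(y-z)$ across its branch cut is $\pm 2\pi i$, so on $\Gamma_C$
$$
|f^+(s)-f^-(s)| \;=\; |\tsig(s)|\,|Z'(s)|.
$$
Since $\tsig$ and $Z'$ are bounded on the closed strip and $\Gamma_C$ has finite length there, the quantity
$$
B \;:=\; \int_{\Gamma_C} |\tsig(s)|\,|Z'(s)|\,|ds|
$$
is finite. Let $F$ bound $|f|$ on the strip edges (again finite, by boundedness of $\tsig$, $Z'$, and of $\log 1/(Z(s)-z)$ away from $s_0$). Lemma~\ref{l:davisbranch} then yields
$$
|E_N| \;\le\; \frac{B}{e^{|\im s_0|N}-1} \;+\; \frac{4\pi F}{e^{\al N}-1}.
$$

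Finally, since $|\im s_0|<\al$ the first term dominates as $N\to\infty$. Taking real parts, $|\eps_N(z)|\le|E_N|$, so for all sufficiently large $N$ we may absorb the factor $(1-e^{-|\im s_0|N})^{-1}$ and the subdominant term into a single constant $C$ to obtain $|\eps_N(z)|\le C e^{-|\im s|N}$, with $s = Z^{-1}(z)$. The main obstacle is the branch-cut bookkeeping in the third paragraph: one must verify that the conventions of Remark~\ref{r:branch} are compatible with the geometric requirements on $\Gamma_C$ in Lemma~\ref{l:davisbranch}, and check that, in the exterior case, the cut running to infinity in $y$ pulls back to a cut that reaches the edge of the strip in $s$ (which follows because $Z^{-1}$ extends continuously to $\pO$ and the $y$-plane cut may be deformed to leave any compact neighborhood of $\pO$).
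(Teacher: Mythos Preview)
Your overall strategy---apply Lemma~\ref{l:davisbranch} to the single-layer integrand and bound the jump across the branch cut---is the paper's approach. However, there is a genuine gap: your integrand $f(s)$ is \emph{not} $2\pi$-periodic in the interior case. As $s$ runs over $[0,2\pi)$, the point $Z(s)$ winds once counterclockwise around $z\in\Omega$, so $\log\frac{1}{Z(s)-z}$ picks up an additive $-2\pi i$; hence $f(s+2\pi)\ne f(s)$ and Lemma~\ref{l:davisbranch} (whose proof relies on cancellation of the vertical sides of the strip) cannot be invoked. The paper repairs this by replacing $\log\frac{1}{Z(s)-z}$ with $\log\frac{1}{Z(s)-z}+is$: the added $is$ is entire, exactly cancels the $2\pi i$ monodromy to restore periodicity, and leaves the real part of the integral---and hence $\eps_N$---unchanged. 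Your branch-cut routing is also self-contradictory as written (you send $\Gamma_C$ through $Z^{-1}(y_0)$ on the real axis while simultaneously claiming it avoids $|\im s|\le|\im s_0|$), but this is really a symptom of the same periodicity problem: once the $+is$ correction is in place and $y_0=Z(0)$, the only remaining branch point in a fundamental domain is $s_0$, and $\Gamma_C$ may be taken from $s_0$ straight to the nearer strip edge as the lemma requires.

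A smaller slip: the analytic continuation of the speed $|Z'(s)|$ off the real axis is not ``the analytic square root of $(Z')^2$'', since $\sqrt{(Z')^2}=\pm Z'$ agrees with $|Z'|$ on $\R$ only when $Z'$ is real-valued there. The correct continuation, as the paper notes, is $\bigl(Z'(s)\,\overline{Z'(\overline{s})}\bigr)^{1/2}$, because $s\mapsto\overline{Z'(\overline{s})}$ is the holomorphic function agreeing with $\overline{Z'(s)}$ on the real axis.
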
   
\begin{proof}
We use the notation $\overline{s}$ to indicate the complex conjugate of
$s$.
We note that the analytic continuation of $|Z'(s)|$ off the real axis
is $\bigl(Z'(s)\overline{Z'(\overline{s})}\bigr)^{1/2}$, which is analytic
and bounded in the strip.
We also note that the imaginary part of \eqref{vs} fails to be $2\pi$-periodic,
due to the jump in the logarithm by $2\pi i$ at $y=y_0$.
We choose $y_0=Z(0)$, and further restrict the branch cut of $\log 1/(y-z)$
in \eqref{vs}
to have a preimage which crosses the strip only along the imaginary $s$ axis.
Then we may then apply the function
$$f(s) = \frac{1}{2\pi} \left(\log \frac{1}{Z(s)-z} + is \right) \tsig(s) \bigl(Z'(s)\overline{Z'(\overline{s})}\bigr)^{1/2}$$
in Lemma~\ref{l:davisbranch}.
Here the new term $is$ cancels the imaginary jump in the logarithm,
making $f$ periodic,
yet does not affect the real part of the integral.
Since the jump in the logarithm everywhere on $\Gamma_C$ is $2\pi i$, then
$\int_{\Gamma_C} |f^+(s)-f^-(s)|\,|ds|$ is bounded by a constant
involving the lengths of $Z(\Gamma_C)$ and $Z(\overline{\Gamma_C})$
and an upper bound on $|\tilde\sigma|$ in the strip.
Analogously to the proof of Theorem~\ref{t:convd},
this bounds the first exponential term in \eqref{ENb};
the weaker second term can be absorbed into the constant.
\end{proof}

We now test the convergence for a combination of single- and double-layer
densities, by checking the accuracy of the
following Green's representation formula (GRF).
For $u$ a solution to the PDE \eqref{bvp1} in a
bounded domain $\Omega\subset\RR$,
\be
\int_\pO \Phi(x,y) \frac{\partial u}{\partial n}(y) \,ds_y
- \int_\pO \frac{\partial \Phi(x,y)}{\partial n(y)} u(y) \,ds_y
\;= \;
\left\{\begin{array}{ll} u(x), & x\in\Omega,\\
0, & x\in\RR\backslash\overline{\Omega}.\end{array}\right.
\label{grf}
\ee
Errors for the native evaluation of this in both interior and exterior cases
are shown together in
Fig.~\ref{f:err}(d): it is clear that the errors grow exponentially
as $x$ approaches $\pO$, from either inside or outside.
In fact, this figure shows the case of the low-frequency
Helmholtz equation with $\om=2$; the Laplace plot is almost identical.
This confirms the intuition that the convergence for
the low-frequency Helmholtz equation is very similar to that for the
Laplace equation.
(See section~\ref{s:helm} for the formulae for the Helmholtz case.)

Why do the the errors stop decreasing with distance once an error of
$10^{-12}$ is reached in Fig.~\ref{f:err}(d)?
This cannot be due to inaccurate boundary data, since the data is exact
to machine precision.
Rather, the answer lies in the fact that the
boundary data derives from a point source
(shown by a large dot) outside, but not too far from, $\Omega$.
This means that the densities cannot be analytic in a larger
annular neighborhood, limiting the maximum convergence rate in $N$ to the
value at this singularity location. 
This illustrates the effect of densities which are not entire functions.

\bfi 
\bmp{3in}(a)
{\small $\log_{10} |u^{(N)}-u|/\|u\|_\infty$, DLP, $u(x,y) = e^y\cos x$}\\
\ig{width=2.8in}{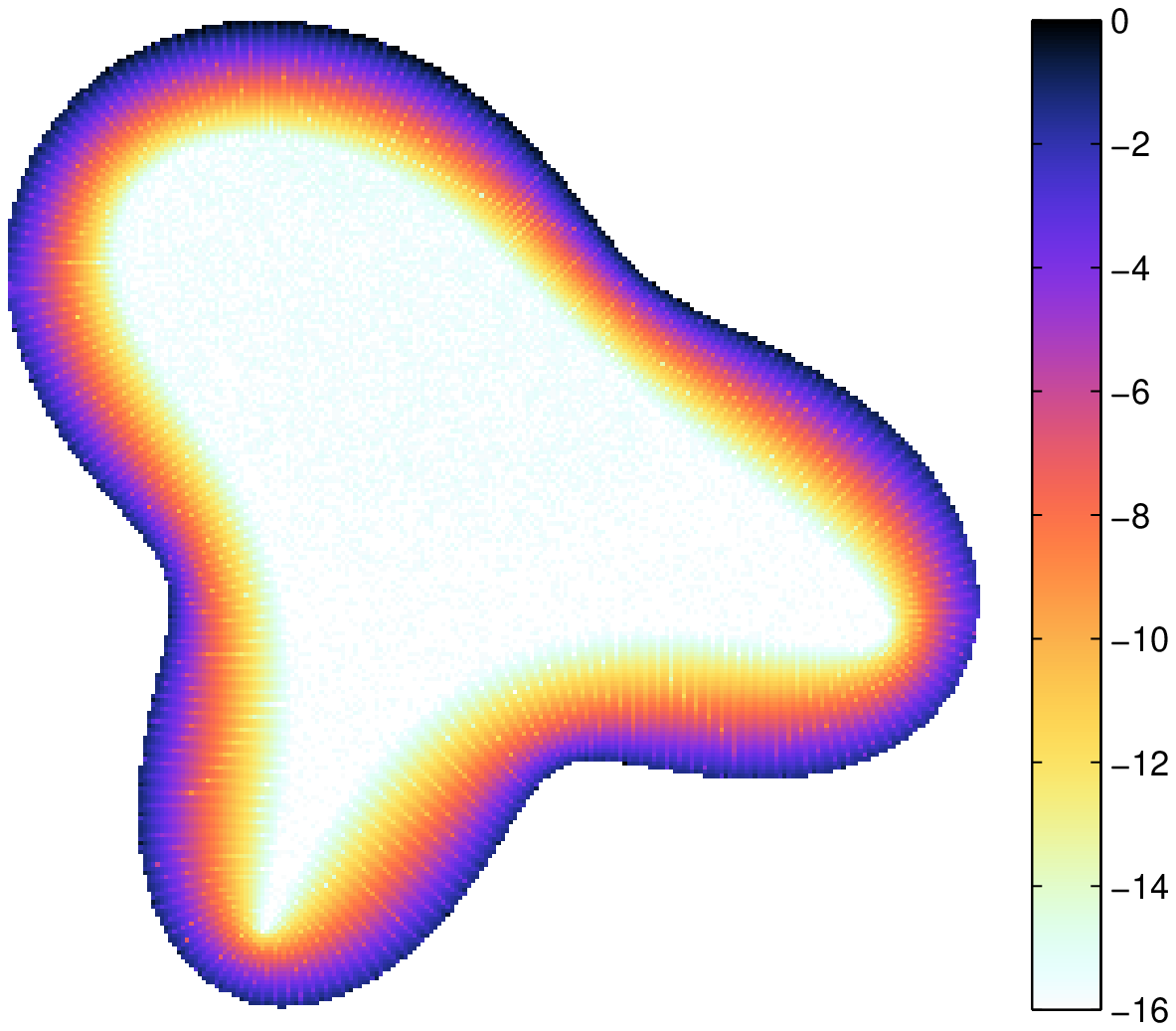}
\emp
\bmp{3in}(b)
{\small $\log_{10} |\hat u-u|/\|u\|_\infty$, DLP, $u(x,y) = e^y\cos x$}\\
\ig{width=2.8in}{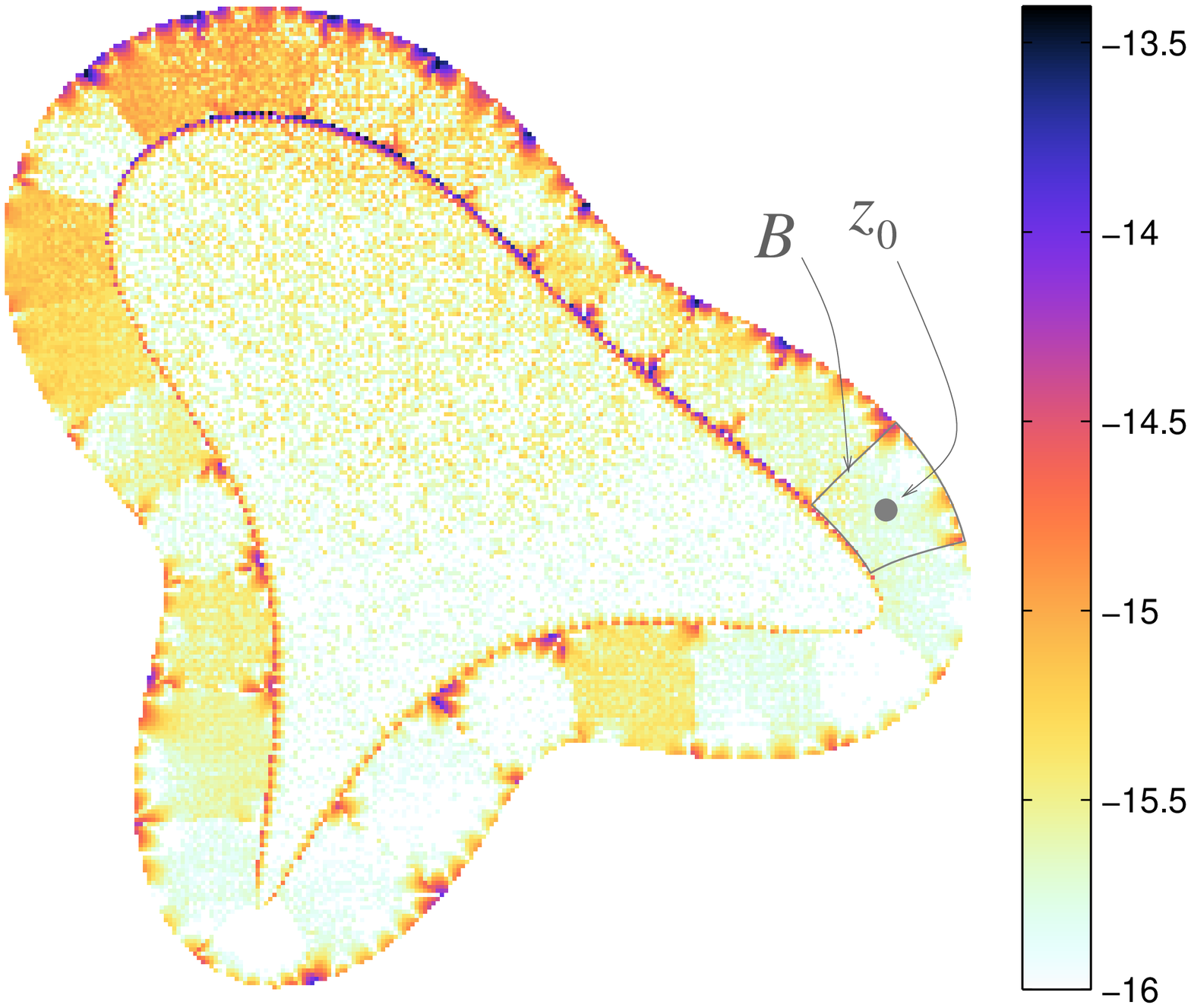}
\emp
\ca{Error (relative to largest value)
in evaluation of the solution of the Laplace BVP with
Dirichlet data $u(x,y) = e^y \cos x$, with $N=130$,
using the native quadrature (a),
and the proposed scheme (b) with $N_B=26$, $p=10$ and $\Nf=4N$.
Note the change in color scale: the relative $L^\infty$ error is $4\times 10^{-14}$,
and the relative $L^2$ error $4\times 10^{-15}$.
One box $B$ and its center $z_0$ are labeled; many other boxes and the
the annular half-neighborhood
$\Omega_\tbox{bad} = A_\tbox{bad} \cap \Omega$ are visible due to jumps
in the error.
}{f:rfnk0}
\efi

\section{Close evaluation of Laplace layer
potentials by surrogate local expansions}
\label{s:surr}

In this section we present and analyze the main new scheme.
We have seen that the native evaluation error is exponentially small far from
$\pO$, but unacceptably large close to $\pO$.
Thus we define the ``bad annular neighborhood'' $A_\tbox{bad} := A_{\al_\tbox{bad}}$
where, following Remark~\ref{r:5h}, we will choose $\al_\tbox{bad} = 10\pi/N$,
that is, a distance around $5h$ either side of $\pO$,
giving around 14 digit expected accuracy throughout
$\RR \backslash A_\tbox{bad}$.
(Obviously $\al_\tbox{bad}$ could be decreased if less accuracy is desired.)
We now describe the new method for layer potential
evaluation in this bad annular neighborhood,
focusing on the case of $\Omega_\tbox{bad} := A_\tbox{bad} \cap \Omega$,
for $\Omega$ an interior domain with
$\pO$ traversed in the counter-clockwise sense.
Thus, target preimages will have positive imaginary part.
(The exterior case is analogous.)

Let us fix $N$ and assume that a double-layer potential $\tau$ has been
approximately computed by the \Ny\ method,
so is represented by $\{\tau_j\}_{j=1}^N$, its values at the nodes.
A precondition for accuracy of the \Ny\ method
is that $\tau$ is well-approximated
by its \Ny\ interpolant through these nodes.
Recalling that $u = \re v$, we wish to evaluate $v$ via \eqref{vyd}
in $\Omega_\tbox{bad}$.
We cover $\Omega_\tbox{bad}$ by non-intersecting ``boxes''
$B_b$, $b=1,\ldots, N_B$. The $b$th box $B_b$
is the image under $Z$ of the
$s$-plane rectangle
$2\pi (b-1/2)/N_B \le \re s < 2\pi (b+1/2)/N_B$,
$0\le \im s \le \al_\tbox{bad}$.
By choosing $N_B = \lceil N/5 \rceil$ the boxes become very close
to being square.
Fig.~\ref{f:rfnk0}(b) shows the annular half-neighborhood and one box.

Let $B = B_b$ be the $b$th box. We choose an expansion center
\be
z_0 := Z(2\pi b/N_B + i\al_0)
\label{z0}
\ee
where the choice of imaginary distance $\al_0 = \al_\tbox{bad}/2$
places $z_0$ roughly central to $B$, and roughly $2.5\,h$
from the boundary.
We represent $v$ by a 
Taylor series
$$
v(z) = \sum_{m=0}^{\infty} c_m (z-z_0)^m
~,
$$
which converges uniformly in $B$ if $v$ is analytic in some disc
centered at $z_0$ with radius greater than $R$,
where $R$ is the maximum radius of the box,
$$
R := \sup_{z\in B}|z-z_0|
~.
$$
Each coefficient $c_m$ can be computed, using the Cauchy formula for
derivatives, as
\be
c_m \;=\; \frac{-1}{2\pi i} \int_\pO \frac{\tau(y)}{(y-z_0)^{m+1}} dy
 = \frac{-1}{2\pi i} \int_0^{2\pi} \frac{\ttau(s)}{(Z(s)-z_0)^{m+1}} Z'(s) ds
~, \qquad m=0,1,\ldots
\label{cmd}
\ee
To approximate the latter integral we use the periodic trapezoid rule
with $\Nf$ new nodes, $s_j = 2\pi j/\Nf$, $j=1,\ldots,\Nf$,
which we call ``fine'' nodes, that is,
\be
c_m \;\approx\;
\hat{c}_m \;:=\;
\frac{-1}{i \Nf} \sum_{j=1}^{\Nf} \frac{\ttau(s_j)}{(Z(s_j)-z_0)^{m+1}} Z'(s_j)
~.
\label{cmhd}
\ee
Since $z_0$ is in the bad annular neighborhood, clearly we need $\Nf>N$ to
evaluate even the coefficient $c_0$ accurately:
one would expect, since $\al_0 = \al_\tbox{bad}/2$, that $\Nf = 2N$ would
be sufficient. However, for
large $m$ the term $1/(Z(s)-z_0)^{m+1}$ is now {\em oscillatory},
so an even larger $\Nf$ will be needed.
Note that $\ttau(s_j)$ must be found by interpolation from
its values at the $N$ \Ny\ nodes.
Our surrogate potential in the box $B$ is then simply the
truncated series with the above approximated coefficients,
and $p$ terms, i.e.\
\be
v(z) \approx \hat{v}(z) = \sum_{m=0}^{p-1} \hat{c}_m (z-z_0)^m
~,
\qquad \mbox{$z$ in box $B$}
~.
\label{vqbx}
\ee
Finally the real part must be taken, thus in each of the
boxes we set $\hat{u} = \re \hat{v}$.
We claim that, if the parameters $p$ and $\Nf$ are well chosen,
then $\hat{u} \approx u$ uniformly
to high accuracy throughout $\Omega_\tbox{bad}$.
(In $\Omega\backslash\Omega_\tbox{bad}$
we revert to $\hat{u} = u^{(N)}$, the native evaluation scheme.)

This is 
illustrated in Fig.~\ref{f:rfnk0}:
we first solve \eqref{bie} for $\tau$, given entire boundary data,
using $N=130$, which is around the minimum $N$ required
to achieve full accuracy in $\tau$.
We then try the new evaluation scheme,
using a standard trigonometric polynomial interpolant
(e.g.\ see \cite[Sec.~11.3]{LIE}, implemented easily via the FFT)
to approximate $\tau$ at the fine nodes, which gives a $\hat{u}$ with
uniform relative accuracy of 13.5 digits (14 digits in the $L^2(\Omega)$
norm).
Note that, since the boundary data is entire, this is a well-behaved
example; we consider more challenging cases after the following analysis.


\subsection{A convergence theorem for the Laplace double-layer potential}
\label{s:thm}

Because the boxes touch $\pO$,
uniform convergence of the Taylor series of $v$ in
any box generally demands that $v$ have an analytic continuation
some distance {\em outside} $\Omega$, which might seem far-fetched.
But it turns out that the region of analyticity of $v$ is 
at least as large as that of $\tau$.
\begin{pro} 
Let $\tau$ be analytic in some closed annular neighborhood $A$ of $\pO$.
Then $v$ given by \eqref{vd} is analytic in $\Omega$ and
moreover
continues 
as an analytic function throughout $A \backslash \Omega$.
\label{p:vanal}
\end{pro}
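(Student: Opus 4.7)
The plan is to prove this via a contour deformation argument, using the analytic continuation of $\tau$ to relocate the Cauchy integral onto a contour lying outside $\Omega$, so that the resulting formula defines an analytic function in a neighborhood strictly larger than $\Omega$ alone.

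First, I would note that analyticity of $v$ in $\Omega$ (and also in $\C\backslash\overline{\Omega}$) is immediate from \eqref{vd}: for $z$ off $\pO$ the integrand is smooth in $y$, and differentiation in $z$ under the integral yields an analytic function. The substance of the proposition is the extension across $\pO$ into $A\backslash\overline{\Omega}$.

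Let $\tilde{\tau}$ denote the holomorphic extension of $\ttau$ to the strip $|\im s|\le \alpha$ corresponding to $A=A_\alpha$, where (by the assumption on $A$ together with section~\ref{s:geom}) $Z$ is analytic and invertible and the curves $\Gamma_a$ are Jordan for $|a|\le\alpha$. Fix any $\beta\in(0,\alpha)$. The curve $\Gamma_{-\beta}$ is then a Jordan curve lying in $A\backslash\overline{\Omega}$, encircling $\Omega$ counter-clockwise, and the annular region it bounds together with $\pO$ is the image $Z(\{-\beta<\im s<0\})\subset A\backslash\overline{\Omega}$. On that annular region the integrand $\tilde{\tau}(y)/(y-z)$ is holomorphic in $y$ for every $z\in\Omega$, since $\tilde{\tau}$ is holomorphic there and the pole at $y=z$ lies inside $\pO$, not in the annular region. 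Cauchy's theorem therefore lets me deform the contour in \eqref{vd} from $\pO$ to $\Gamma_{-\beta}$, giving
\be
v(z) \;=\; \frac{-1}{2\pi i}\int_{\Gamma_{-\beta}} \frac{\tilde{\tau}(y)}{y-z}\, dy, \qquad z\in\Omega.
\label{pvext}
\ee

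The right-hand side of \eqref{pvext}, however, is holomorphic in $z$ throughout the bounded component of $\C\backslash\Gamma_{-\beta}$, which properly contains $\Omega$ and extends outward up to $\Gamma_{-\beta}$ itself. Thus it provides an analytic continuation of $v|_\Omega$ across $\pO$ into $Z(\{-\beta<\im s\le 0\})$. Since $\beta\in(0,\alpha)$ was arbitrary, and since analytic continuations agree on their common domain by the identity principle, piecing together the extensions for $\beta\to\alpha$ yields an analytic continuation of $v|_\Omega$ to all of $A\backslash\Omega$.

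The only real obstacle is bookkeeping for the contour deformation: checking that $\Gamma_{-\beta}$ is a single Jordan curve (guaranteed by the invertibility of $Z$ on the closed strip assumed in section~\ref{s:geom}), that its orientation matches that of $\pO$, and that the pole $y=z$ is truly excluded from the region swept out by the deformation. Once these geometric points are in place, Cauchy's theorem and differentiation under the integral do the rest.
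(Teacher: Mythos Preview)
Your proof is correct and follows essentially the same contour-deformation idea as the paper: the paper writes the identity $\int_\Gamma = \int_{\pO} + \int_{\Gamma-\pO}$ with $\Gamma$ the exterior boundary of $A$, notes the second term vanishes by Cauchy's theorem for $z\in\Omega$, and uses the left side as the analytic continuation, whereas you deform to intermediate curves $\Gamma_{-\beta}$ and let $\beta\to\alpha$. The only difference is that the paper deforms in one step rather than via a limit, but the argument is the same.
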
 
\begin{proof}
Let $\Gamma$ be the 
exterior boundary of $A$.
Consider the identity arising from \eqref{vyd},
$$
\frac{-1}{2\pi i} \int_\Gamma \frac{\tau(y)}{y-z} dy
\;= \;
v(z) + \frac{-1}{2\pi i} \int_{\Gamma-\pO} \frac{\tau(y)}{y-z} dy
~,
$$
where $-\pO$ means $\pO$ traversed in the opposite direction.
Both sides equal $v(z)$ for $z\in\Omega$
since the second term on the right is zero by Cauchy's theorem.
However, as a function of $z$,
the left side is analytic throughout $A \cup \Omega$,
and thus provides the desired analytic continuation of $v$.
\end{proof}

To state a convergence result,
some control is needed of the distortion induced by the conformal
map $Z$. Let $A_\al$ be an annular neighborhood in which $Z^{-1}$ exists.
Recall that box $B$ has a center $z_0$ (which we assume is in $A_\al$)
with $\al_0$ the imaginary part of its preimage, and radius $R$.
Given this, we define a {\em geometric distortion quantity},
\be
\gamma = \gamma_{z_0,R} := \sup_{0<a<\al_0} \frac{R}{d(\Gamma_a,z_0)}\frac{\al_0-a}{\al_0}
~,
\label{gamma}
\ee
where $d(X,y)$ means the minimum Euclidean distance from the point $y$ to
points in the set $X$.
It is clear that, as $a>0$ approaches $\al_0$ from below,
the curve $\Gamma_a$ first touches $z_0$ at $a=\al_0$;
an interpretation of $\gamma^{-1}$ is a scaled lower bound on the ratio
between its distance and $\al_0-a$
(we note that in peculiar geometries it may be that the nearest part of
$\Gamma_a$ is not within the box $B$).
For an undistorted square box with $z_0$ at its center, $\gamma=\sqrt{2}$; in
practical settings $\gamma$ is around 1.5 to 2.
For the geometry in
Fig.~\ref{f:rfnk0}(b), the median $\gamma$ is 1.7 and the maximum 2.4
(occurring at around 7 o'clock, where the closest
interior Schwarz singularity lies).\footnote{These
$\gamma$ values are accurately estimated using
20 values of $\al$ with $10^3$ points on each curve $\Gamma_\al$.}

Our result concerns convergence of the surrogate scheme
simultaneously in the expansion order $p$ and the number of fine nodes $\Nf$.

\begin{thm} 
Let $B$ be a box with radius $R$.
Let the center $z_0$ have $\im z_0 = \al_0$,
with $A_{\al_0}$ an annular neighborhood in which $Z^{-1}$ is holomorphic,
and in which the 
density 
$\tau$ is holomorphic and bounded.
Let the double-layer potential
$v$ given by \eqref{vyd} be analytic in an open neighborhood of some closed
disc of radius $\rho>R$ about $z_0$.
Let $\hat v$ be given by the surrogate scheme
\eqref{vqbx} with $\hat{c}_m$ given
by \eqref{cmhd}, using the exact density at the fine nodes
$\ttau(s_j) = \tau(Z(s_j))$, $j=1,\dots,\Nf$.
Then the error function
$$
\hat\eps = \re( \hat {v} - v)
$$
has the uniform bound
\be
\hat{\eps}_B \;:=\; \sup_{z\in B} |\hat\eps(z)|
\; \le \;
C \left(\frac{R}{\rho}\right)^p +
C (e\gamma \al_0 \Nf)^p p^{1-p} e^{-\al_0 \Nf}
\qquad \mbox{ for all } \Nf \ge 1/\al_0, \; 1\le p\le \Nf/2
~,
\label{eps}
\ee
where $C$ indicates constants that depend on $\pO$, $Z$, $\tau$, $B$ and $z_0$,
but not on $p$ nor $\Nf$.
\label{t:eps}
\end{thm}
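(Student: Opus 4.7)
The key structural observation is that the surrogate coefficient $\hat c_m$ defined by \eqref{cmhd} coincides exactly with the $m$th Taylor coefficient of the native evaluation $v^{(M)}$ about $z_0$. This follows on substituting the geometric series $1/(Z(s_j)-z) = \sum_{m\ge 0}(z-z_0)^m/(Z(s_j)-z_0)^{m+1}$ into the finite sum defining $v^{(M)}$ and collecting powers of $(z-z_0)$. Writing $T_p$ for the degree-$(p-1)$ Taylor truncation operator about $z_0$, we therefore have $\hat v = T_p[v^{(M)}]$ and
$$
\hat v - v \;=\; T_p[v^{(M)} - v] \;-\; (v - T_p[v]).
$$
Since $|\re(\hat v - v)| \le |\hat v - v|$, the triangle inequality reduces the theorem to bounding the two pieces, which will produce respectively the second and first terms of \eqref{eps}.

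The pure-truncation remainder $v - T_p[v]$ is handled in one step: the hypothesis that $v$ is analytic in an open neighborhood of the closed disc $\{|w-z_0|\le\rho\}$ (the analyticity inside $\Omega$ being supplied by Proposition~\ref{p:vanal}) yields the Cauchy bound $|c_m|\le\|v\|_\rho/\rho^m$, and the geometric tail for $|z-z_0|\le R$ sums to the first term $C(R/\rho)^p$ of \eqref{eps}.

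For $T_p[v^{(M)} - v]$ the plan is to apply Cauchy's estimate to $v^{(M)} - v$ on a smaller disc $\{|w-z_0|\le r\}$ with $r\in(0,R/\gamma)$ chosen once, independently of $m$. Two properties of the distortion quantity $\gamma$ enter. First, $d(\pO,z_0)\ge R/\gamma$ (the $a\to 0$ case of the definition of $\gamma$) ensures the disc lies in $\Omega$, so that $v^{(M)}$ and $v$ are both analytic there. Second, $d(\Gamma_a,z_0)\ge R(\alpha_0-a)/(\gamma\alpha_0)$ forces every point $w$ on the disc to have preimage with $|\im Z^{-1}(w)|\ge\alpha_0(1-r\gamma/R)$, so Theorem~\ref{t:convd} (in its uniform-on-compact-subsets form) yields $\|v^{(M)}-v\|_{\{|w-z_0|\le r\}}\le C_1 e^{-\alpha_0(1-r\gamma/R)M}$. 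Cauchy's estimate then gives $|\hat c_m - c_m|\le C_1 e^{-\alpha_0(1-r\gamma/R)M} r^{-m}$; multiplying by $R^m$, summing for $m=0,\ldots,p-1$, and bounding the geometric sum by $p(R/r)^p$ produces the pointwise bound $C_1 p (R/r)^p e^{-\alpha_0(1-r\gamma/R)M}$ on $|T_p[v^{(M)} - v](z)|$ for $z\in B$.

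The final step is to minimize this bound in $r$. Setting $\lambda:=r\gamma/R\in(0,1)$, calculus gives the minimizer $\lambda^*=p/(\alpha_0 M)$; provided this lies in $(0,1)$, substitution produces the minimized value $(e\gamma\alpha_0 M)^p p^{-p} e^{-\alpha_0 M}$, so multiplied by the earlier factor $p$ the coefficient-quadrature piece is bounded by $C_1(e\gamma\alpha_0 M)^p p^{1-p} e^{-\alpha_0 M}$, which is precisely the second term of \eqref{eps}. The main technical obstacle I expect is this last optimization: the hypotheses $M\ge 1/\alpha_0$ and $1\le p\le M/2$ do not directly imply $p<\alpha_0 M$, so in the complementary regime a separate argument is needed---either by taking $r$ near the boundary $R/\gamma$ and absorbing the prefactor into $C$, or by observing that in that regime $(e\gamma\alpha_0 M)^p p^{1-p} e^{-\alpha_0 M}$ exceeds a positive constant, making the desired estimate trivially true once $C$ is chosen sufficiently large. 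A secondary care point is the uniformity of $C_1$ in $r$, which follows because the admissible discs all lie in a common compact subset where Theorem~\ref{t:convd} provides a uniform constant.
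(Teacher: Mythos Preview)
Your approach is correct and arrives at the same bound, but by a genuinely different organizing idea. The paper bounds each coefficient error $|\hat c_m - c_m|$ directly, by applying the Davis theorem (Theorem~\ref{t:davis}) to the parametrized integral \eqref{cmd} in a strip of half-width $\alpha\in(0,\alpha_0)$, obtaining $|\hat c_m - c_m|\le C\,d(\Gamma_\alpha,z_0)^{-(m+1)}(e^{\alpha M}-1)^{-1}$; after summing and using the definition of $\gamma$ to replace $R/d(\Gamma_\alpha,z_0)$ by $\gamma\alpha_0/(\alpha_0-\alpha)$, it optimizes over $\alpha$, finding $\hat\alpha=\alpha_0-p/M$. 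You instead recognize the $\hat c_m-c_m$ as the Taylor coefficients of $v^{(M)}-v$, bound that holomorphic function on a disc of radius $r$ via Theorem~\ref{t:convd}, and extract coefficient bounds by Cauchy's estimate; under the substitution $\alpha=\alpha_0(1-\lambda)$ with $\lambda=r\gamma/R$ the two intermediate bounds agree, so the optimizations coincide. Your route is more modular---it reuses Theorem~\ref{t:convd} as a black box and exposes the clean identity $\hat v=T_p[v^{(M)}]$---while the paper's is more self-contained, needing only Theorem~\ref{t:davis} (the pole at $Z^{-1}(z_0)$ lies outside the strip, so Lemma~\ref{l:davispole} is not required) and avoiding the geometric check that the Cauchy disc stays inside $\Omega$. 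One small point to tighten: Theorem~\ref{t:convd} as stated bounds only $\re(v^{(M)}-v)$, whereas Cauchy's estimate needs the full complex bound; you should cite Lemma~\ref{l:davispole} (which delivers this for all $M$, with uniform constant, and with no $M_0$ threshold if you retain the $1/(e^{\cdot}-1)$ form) rather than Theorem~\ref{t:convd} literally. Your flagged obstacle about whether $\lambda^*<1$ under the stated hypotheses is in fact shared with the paper's proof, and your proposed workaround is sound.
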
 

The restriction to $\Nf \ge 1/\al_0$ always holds in practice
because $N$ is already many times larger than $1/\al_0$, whilst $\Nf>N$.
The restriction on $p$ also holds in practice, since usually $\Nf>10^2$.
The first term in \eqref{eps} is simply the truncation
error of the Taylor series for $v$ in the box, so is independent
of $\Nf$.
Note that the required analyticity of $v$ is already given
by Prop.~\ref{p:vanal} and the analyticity of $\tau$,
with $\rho \approx \sqrt{2} R$,
unless the local distortion is very large.

The $\Nf$ dependence of the second term 
may be written $e^{-\al_0 \Nf + p \log \Nf}$, showing that (at fixed
$p$) this term is asymptotically exponentially convergent in $\Nf$
with rate $\al_0$.
Thus the whole scheme is also asymptotically
exponentially convergent: given arbitrary
$\eps>0$, by fixing $p$ such that the first term is smaller than
$\eps/2$, one may find a value of $\Nf$ such that the second term is
also smaller than $\eps/2$, and have $\hat\eps_B\le \eps$.
There is a subtlety. Fixing $\Nf$ while increasing
$p$ is a {\em bad idea}:
in practice it leads to exponential {\em divergence},
because the factor $e\gamma \al_0 \Nf$ is usually large (around 300,
independent of the problem size),
and hence the second term grows exponentially in $p$ for typical
$p$ (less than 30).

\begin{proof} 
We will use $C$ to indicate (different) constants that have
only the dependence stated in the theorem.
Comparing $\hat{v}$ to the exact Taylor series for $v$,
and using $|z-z_0|\le R$,
\be
\hat\eps_B \;\le\;
\sup_{z\in B}\biggl|\sum_{m=0}^{p-1} \hat{c}_m (z-z_0)^m - v(z)\biggr|
\; \le \;
\sum_{m\ge p}|c_m| R^m +
\sum_{m=0}^{p-1}|\hat{c}_m-c_m| R^m
~.
\label{epsest}
\ee
For the first term we use $|c_m|\le C /\rho_m$, which follows from the
Cauchy integral formula in the disc of radius $\rho$
(e.g.\ \cite[Cor.~4.3]{steinshakarchi}), and bound the geometric
sum.
For the second term we apply
Theorem~\ref{t:davis} to the $s$-integral \eqref{cmd}
in a strip of width $\al$, so that, for each $\al \in (0,\al_0)$,
\be
|\hat{c}_m-c_m| \;\le\; 
\left(\sup_{\im s = \pm \al} |\ttau(s) Z'(s)| \right)
\frac{2}{d(\Gamma_\al,z_0)^{m+1}} \frac{1}{e^{\al \Nf} - 1}
\;\le\;
\frac{C}{d(\Gamma_\al,z_0)^{m+1}} \frac{1}{e^{\al \Nf} - 1}
~,
\label{cmerr}
\ee
where the second estimate follows from the
maximum modulus principle and boundedness of $Z'$ and $\ttau$ in the
$\al_0$-strip.
Inserting the above two estimates into \eqref{epsest},
then bounding each term in the sum by the last 
term since
$R>d(\Gamma_\al,z_0)$, gives
\bea
\hat\eps_B
& \le &
\frac{C}{1-R/\rho} \left(\frac{R}{\rho}\right)^p
+
\sum_{m=0}^{p-1} R^m \frac{C}{d(\Gamma_\al,z_0)^{m+1}} \frac{1}{e^{\al \Nf} - 1}
\; \le \;
C\left(\frac{R}{\rho}\right)^p
+
C p \biggl(\frac{R}{d(\Gamma_\al,z_0)}\biggr)^p
\frac{1}{e^{\al \Nf} - 1}
\nonumber \\
& \le &
C\left(\frac{R}{\rho}\right)^p
+
C p \biggl(\frac{\gamma \al_0}{\al_0-\al}\biggr)^p e^{-\al\Nf}
\qquad \mbox{ for all } \Nf > 1/2\al
~,
\label{epsbnd}
\eea
where in the last step we used \eqref{gamma} and simplified the exponential
bound.
This estimate holds for each $\al \in (0,\al_0)$;
however, if $\al$ is chosen too small, the last exponential will
decay very slowly. Conversely, if $\al$ approaches $\al_0$ then
the distance $d(\Gamma_\al,z_0)$ vanishes and its $p$th negative
power blows up rapidly.
For each $p$ and $M$, the optimal value $\hat\al$ which minimizes
this second term is found
by setting $\partial/\partial\al$ of the logarithm of this term to zero,
and solving, giving
$$
\hat\al = \al_0 - \frac{p}{\Nf}
$$
which by the condition on $p$ is never smaller than $\al_0/2$.
Substituting $\al=\hat\al$ in \eqref{epsbnd} gives \eqref{eps}.
\end{proof} 




To interpret the theorem we need to relate it to the original $N$
\Ny\ nodes (notice that $N$ does not appear in the theorem).
We introduce two dimensionless parameters. Let
$$\delta \;:=\; \frac{\al_0 N}{2\pi}$$
be the distance of the center $z_0$ from the boundary
in units of local node spacing $h$; recall that we set $\delta = 2.5$
in the above.
Let $$\beta \;:=\; \frac{\Nf}{N}$$
be the ``upsampling ratio'', the ratio of the number
of fine nodes to the number of original \Ny\ nodes.

It is clear that the effort to compute \eqref{cmhd} as presented
scales as $O(\beta)$, once $N$, $N_B$ and $p$ are fixed.
Thus we wish to know the minimum $\beta$ needed,
and rewrite the second term in \eqref{eps} as
\be
C p \exp \biggl(-2\pi \delta \beta + p \log
\bigl(2\pi \delta \beta \frac{\gamma e}{p}\bigr)\biggr)
~.
\label{2nd}
\ee
For instance, fixing $\delta=2.5$ and taking $\gamma \approx 1.7$,
we may solve (via rootfinding) for the approximate $\beta$ required for
convergence by assuming that $C$ is $O(1)$
then equating the exponential term in \eqref{2nd} to
the desired error level, e.g.\ $10^{-14}$.
The predicted results are: for $p=10$, $\beta = 4.2$ is sufficient.
This matches well the finding that $\beta = 4.0$ was
sufficient to give around 14-digit accuracy in the example
of Fig.~\ref{f:rfnk0}(b).
For $p=20$, $\beta = 5.9$ is sufficient,
indicating that $\beta$ need grow only weakly with $p$.

\begin{rmk}[box centers]
Equation \eqref{2nd} suggests that increasing $\delta$ (moving the
centers towards the far edge of their boxes) might increase
accuracy. In practice, however, we find that
this does not help because both $R/\rho$ and $\gamma$ increase
for the worst-case boxes.
\end{rmk}

\bfi 
\hspace{-.1in}
\bmp{2.1in}(a)
{\small $\log_{10} \|\hat u-u\|_\infty/\|u\|_\infty$, \, $N{=}130$\\
\mbox{}\qquad $u(x,y) = e^y\cos x$}\\
\ig{width=2in}{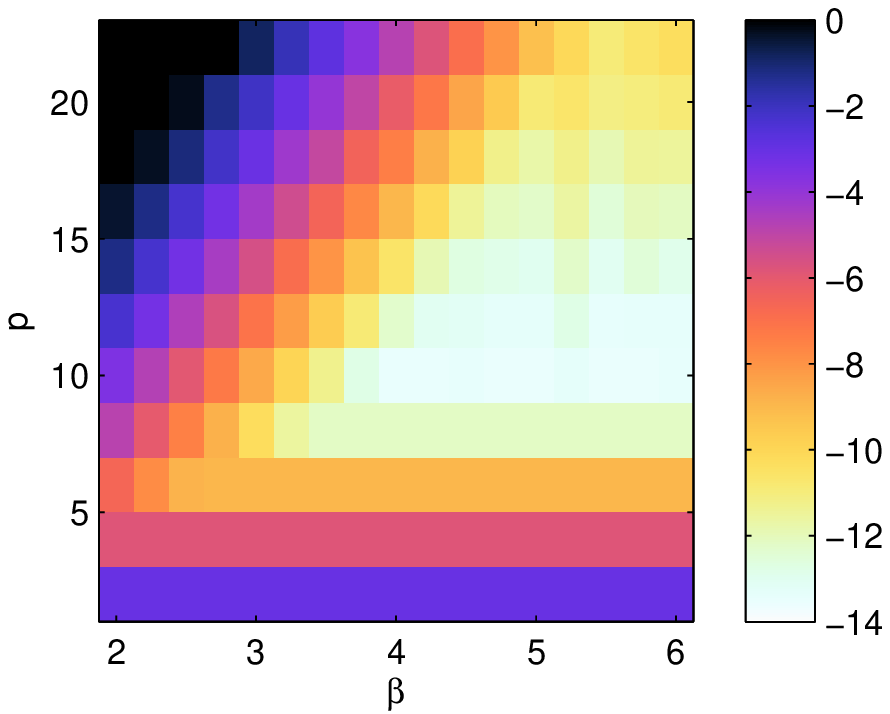}
\emp
\bmp{2.1in}(b)
{\small $\log_{10} \|\hat u-u\|_\infty/\|u\|_\infty$, \, $N{=}180$\\
\mbox{}\qquad $u$ pt src, \;\; \Ny\ interp}\\
\ig{width=2in}{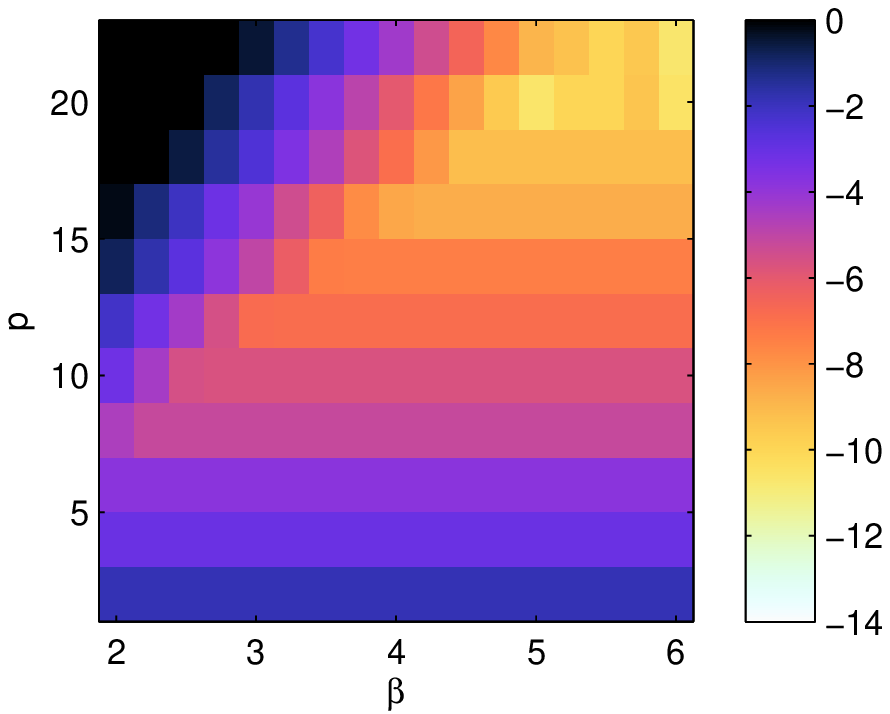}
\emp
\bmp{2in}(c)
{\small $\log_{10} \|\hat u-u\|_\infty/\|u\|_\infty$, \, $N{=}340$\\
\mbox{}\qquad $u$ pt src, \;\; trig poly interp}\\
\ig{width=2in}{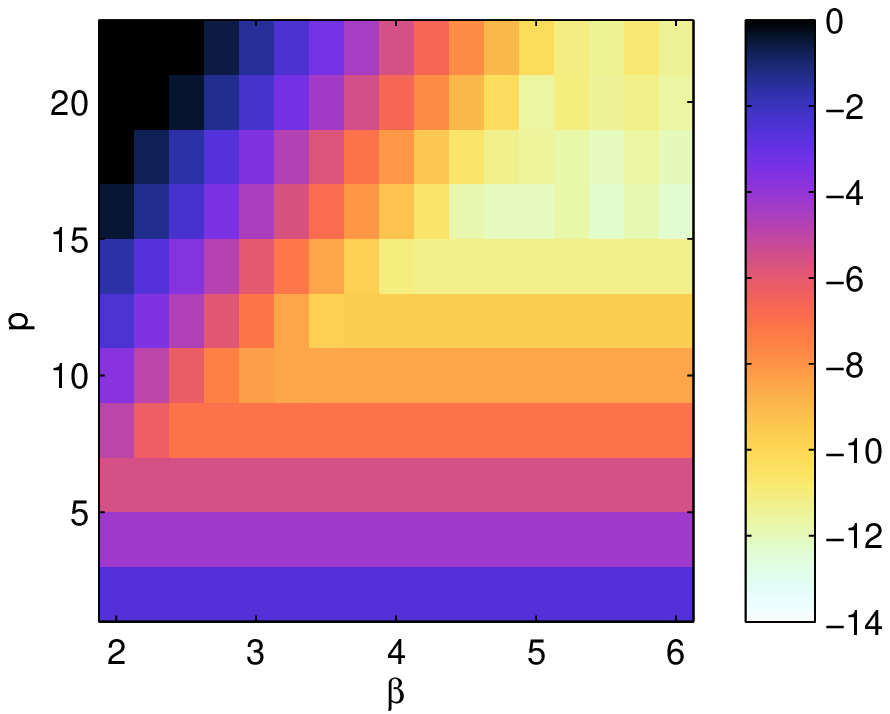}
\emp
\ca{Convergence of the maximum relative $L^\infty(\Omega)$ error with
respect to both parameters $p$ and $\beta$, at fixed $N$ and $\delta$,
for the surrogate evaluation scheme of the double-layer potential solving a
Laplace BVP in the domain
of Fig.~\ref{f:rfnk0}.
(a) Dirichlet data $u(x,y) = e^y \cos x$, with $N=130$,
trigonometric interpolant.
(b) $u(x,y) = \log \|(x,y)-(1,0.5)\|$, with $N=180$,
\Ny\ interpolant.
(c) $u(x,y) = \log \|(x,y)-(1,0.5)\|$, with $N=340$,
trigonometric interpolant.
}{f:rfnk0conv}
\efi

\subsection{Numerical performance and the effect of a nearby singularity}
\label{s:sing}

We now study in more detail the
performance of the surrogate method for the Laplace
double-layer potential, in the context of solving a Dirichlet
BVP using the \Ny\ method.

We first return to the case presented in Fig.~\ref{f:rfnk0}:
boundary data $u(x,y) = e^y \cos x$, an entire function.
We fix $N=130$ nodes (around the value
at which the \Ny\ method has completely converged),
and use a trigonometric polynomial interpolant to
get $\tau$ at the fine nodes.
With the $N_B$ boxes fixed, we
consider convergence with respect to $p$ and $\beta$.
Fig.~\ref{f:rfnk0conv}(a) shows the resulting $L^\infty$ errors (estimated on a
spatial grid of spacing 0.02) relative to $\|u\|_\infty$.
The two terms in Theorem~\ref{t:eps} are clearly visible:
the errors are always large at small $p$ 
(due to the first term in \eqref{eps}),
but even at large $p$ the errors are large when $\beta$ is too small
(due to the second term).
The $\beta$ needed for convergence grows roughly linearly in $p$,
as one would expect if the value of the log in \eqref{2nd} is
treated as roughly constant.
In both directions convergence appears exponential, exceeding 13 digits
once $p=10$ and $\beta\ge 4$.

\begin{rmk}
For $p\ge 16$ the best achievable error
(once $\beta$ has converged) worsens slightly
as $p$ grows.
We believe that this is due to catastrophic cancellation in the oscillatory
integrand \eqref{cmd} for large $m$,
combined with the usual double-precision round-off error.
This seems to be a fundamental limit of the local expansion
surrogate method implemented in floating-point arithmetic.
However, the loss is quite mild: even at $p=22$ it is only 2.5 digits.
\label{r:cancel}
\end{rmk}

We now change to boundary data $u(x,y) = \log \|(x,y)-(1,0.5)\|$,
which is still a Laplace solution in $\Omega$ and is still
real analytic on $\pO$, but whose analytic continuation
outside $\Omega$ has a {\em singularity} at the exterior point $(1,0.5)$,
a distance of only 0.24 from $\pO$.
Its preimage has distance from the real axis
$\al_\ast := -\im Z^{-1}(1+0.5i) \approx 0.176$.
The \Ny\ method fully converged by $N=180$, as assessed by
the error at a distant interior point and by the density values
at the nodes $\tau_j$: see the first two curves in Fig.~\ref{f:rfnk0interp}(a).
In fact the convergence rate matches $e^{-\al_\ast N}$,
as is to be expected, since
the \Ny\ convergence rate is known to be the same as that of the
underlying quadrature scheme (see discussion after \cite[Cor.~12.9]{LIE}),
which here is controlled by the singularity via the Davis theorem.

However, turning to surrogate evaluation, Fig.~\ref{f:rfnk0conv}(b)
shows that the error due to Taylor
truncation converges quite slowly with $p$, as is inevitable for a nearby
singularity, pushing the optimal $p$ up to around 22. In the best case only
10 digits are achieved.
To evaluate $\tau$ at the fine nodes, the \Ny\ interpolant \cite[Sec.~12.2]{LIE}
was used here rather than the trigonometric polynomial
interpolant, for the following reason.

\begin{rmk}[interpolants]
In the regime where a nearby singularity in the 
right-hand side data
controls the convergence rate (rather than the kernel function),
the \Ny\ interpolant converges {\em twice as fast}
as the trigonometric polynomial interpolant,
as shown by Fig.~\ref{f:rfnk0interp}(a).
This reflects the 
fact that periodic trapezoid quadrature
is ``twice as good'' as trigonometric interpolation \cite[p.201]{LIE},
because the former is exact for Fourier components with index
magnitudes up to $N-1$, but the latter is exact only up to $N/2$.
Informally speaking, the trapezoid rule (and hence the
\Ny\ interpolant) ``beats the Nyquist sampling theorem by a factor of two!''
\label{r:twice}
\end{rmk}

Thus the \Ny\ interpolant is preferred in this context
when it is desired that $N$ be its smallest converged value.
Fig.~\ref{f:rfnk0interp}(b) and (c) show the loss in accuracy
(and spurious evanescent waves which appear near $\pO$)
that result from attempting to use the inferior trigonometric interpolant.
Note that this
loss would occur for {\em any} accurate close-evaluation scheme,
since it is a loss of accuracy in the function $\tau$ itself.
Nor is the \Ny\ interpolant perfect: it requires the application of
a $\Nf$-by-$N$
dense matrix, and it appears to cause up to 1 digit more roundoff error
than in the trigonometic case.

However, by nearly doubling $N$ to 340 (which is somewhat wasteful),
the Fourier coefficents
of $\ttau$ decay to machine precision by index $N/2$
(see Fig.~\ref{f:rfnk0interp}(a))
making the trigonometric interpolant as accurate as the \Ny\ one,
whilst box radii decrease so that the $p$-convergence is faster.
We show convergence for this trigonometric case in Fig.~\ref{f:rfnk0conv}(c):
12 digits accuracy result at $p=16$ and $\beta=5$.

\bfi 
\hspace{-.4in}
\bmp{3.1in}
\ig{width=3.1in}{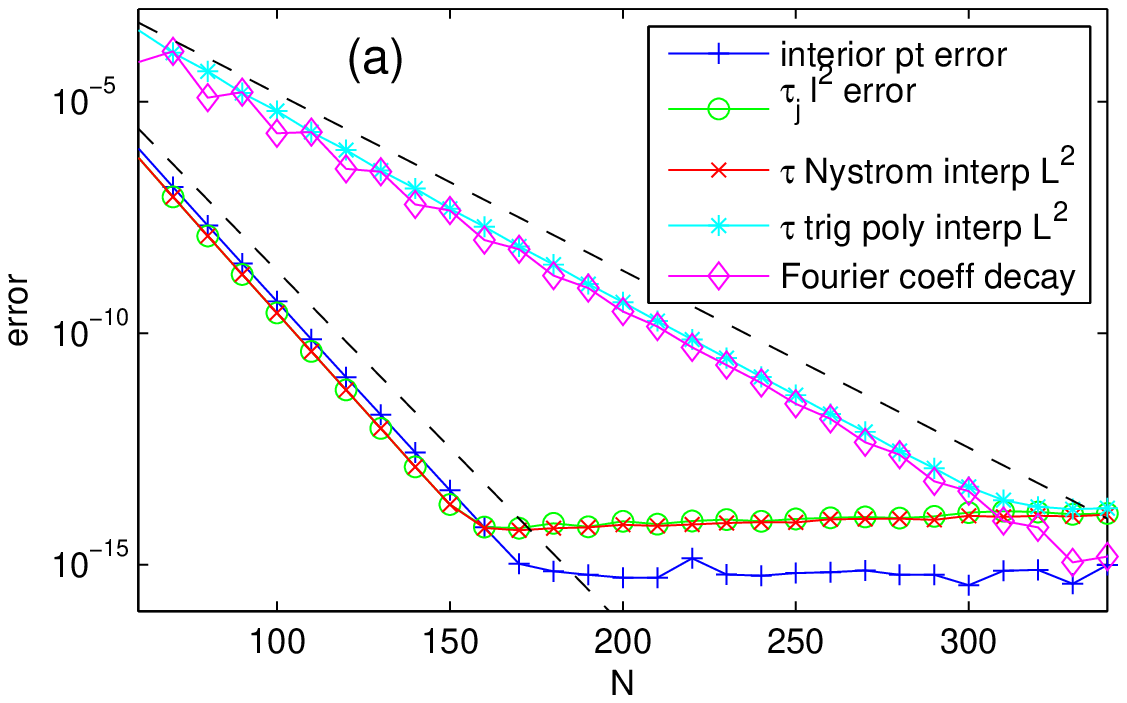}
\emp
\bmp{1.5in}(b)
{\small $\log_{10} \|\hat u-u\|_\infty/\|u\|_\infty$\\
\mbox{}\qquad \Ny\ interp}\\
\ig{width=1.5in}{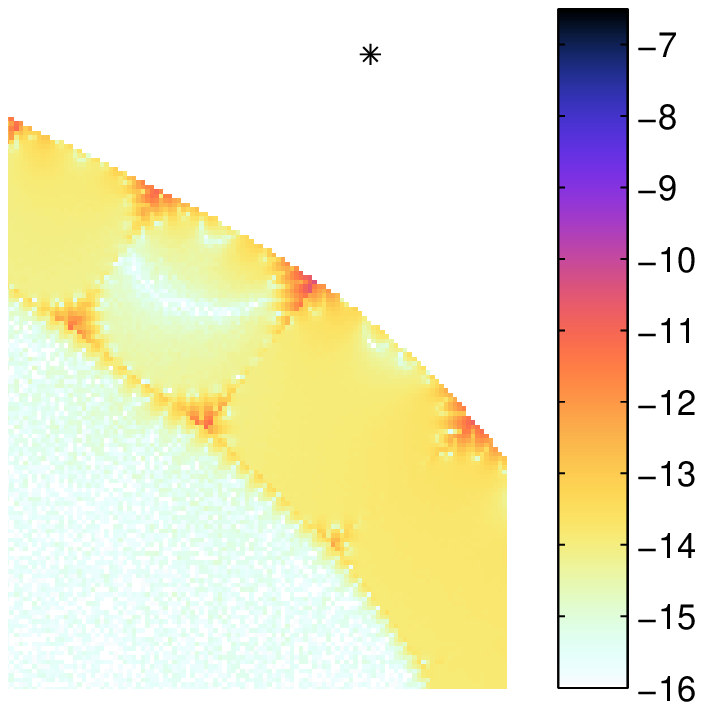}
\emp
\bmp{1.5in}(c)
{\small $\log_{10} \|\hat u-u\|_\infty/\|u\|_\infty$\\
\mbox{}\qquad trig interp}\\
\ig{width=1.5in}{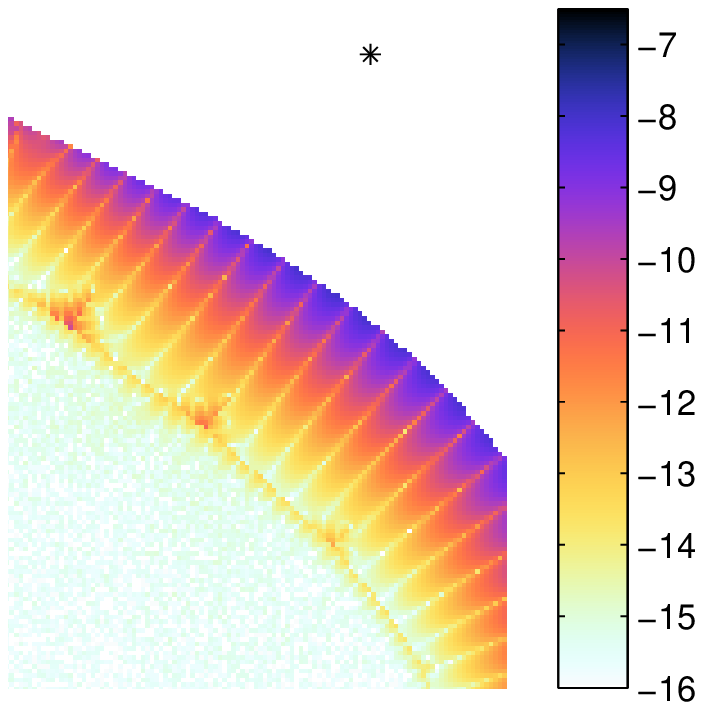}
\emp
\ca{
Comparing two interpolants in the \Ny\ solution of $\tau$,
for  solution of the Dirichlet BVP for Laplace's equation,
with data $u(x,y) = \log \|(x,y)-(1,0.5)\|$,
in the domain of Fig.~\ref{f:rfnk0}.
(a) shows convergence of various errors:
the solution at a distant interior point, the $l_2$ error at the nodes,
the $L^2([0,2\pi))$ errors for the \Ny\ and trigonometric interpolants,
and the size of the $N/2$ Fourier coefficient of $\tau$.
Dotted lines show exponential decay at the rates $e^{-\al_\ast N}$
and $e^{-\al_\ast N/2}$.
Fixing $N=180$, a zoomed plot of the relative error in the surrogate
scheme is shown using
(b) \Ny\ interpolant, and (c) trigonometric interpolant,
near the singularity (shown by a $\ast$).
}{f:rfnk0interp}
\efi


\subsection{The Laplace single-layer case and a Neumann problem}
\label{s:slpthm}

Recall that the single-layer potential is the real part of $v$ given
by \eqref{vs}.
By writing $\log 1/(y-z) = \log 1/(y-z_0) - \log(1-\frac{z-z_0}{y-y_0})$,
and using the Taylor series for the second logarithm, we find that
the single-layer version of \eqref{cmd} is
\be
c_0 \;=\; \frac{1}{2\pi}\int_\pO \biggl(\log \frac{1}{y-z_0}\biggr)
\sigma(y) |dy|,
\qquad
c_m\;=\;\frac{1}{2\pi m} \int_\pO \frac{\sigma(y)}{(y-z_0)^m} |dy|,
\quad m=1,2,\ldots
\label{cms}
\ee
Here for $c_0$ the branch cuts of Remark~\ref{r:branch} apply.
The convergence of the surrogate scheme
is then as least as good as for the double-layer
case.
\begin{thm} 
The version of Theorem~\ref{t:eps} corresponding to the single-layer potential
holds.
That is, with the same conditions, the uniform error bound \eqref{eps} holds,
but with $\tau$ changed to $\sigma$, the potential $v$ given by
\eqref{vs}, and $\hat c_m$ given by the $M$-node periodic trapezoid
rule applied to the parametrized version of \eqref{cms}.
\label{t:epss}
\end{thm}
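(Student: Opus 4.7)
My plan is to follow the proof of Theorem~\ref{t:eps} step by step, substituting the double-layer ingredients with their single-layer counterparts. Following \eqref{epsest}, I would split the error as $\hat\eps_B \le \sum_{m\ge p}|c_m|R^m + \sum_{m=0}^{p-1}|\hat c_m - c_m|R^m$. The first sum requires the estimate $|c_m|\le C/\rho^m$, which in turn requires an analog of Proposition~\ref{p:vanal} for the single-layer potential. I would prove this analog by the same device: since $\sigma$ is holomorphic in a closed annular neighborhood $A$ of $\pO$ with outer boundary $\Gamma$, replacing $\pO$ by $\Gamma$ in \eqref{vs} (with an appropriate branch of the logarithm, as in Remark~\ref{r:branch}) changes nothing in $\Omega$ by Cauchy's theorem, while providing the analytic continuation of $v$ into $A\setminus\Omega$. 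Cauchy's derivative formula then yields $|c_m|\le C/\rho^m$ whenever $v$ is analytic on a closed disc of radius $\rho > R$ about $z_0$.

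For the coefficient errors with $m\ge 1$, the parametrized integrand for $c_m$ is $\frac{1}{2\pi m}\tsig(s)\bigl(Z'(s)\overline{Z'(\bar s)}\bigr)^{1/2}/(Z(s)-z_0)^m$. Using the analytic continuation of $|Z'(s)|$ introduced in the proof of Theorem~\ref{t:convs}, this integrand is $2\pi$-periodic, holomorphic, and bounded by $C/\bigl(m\, d(\Gamma_\al,z_0)^m\bigr)$ in the strip $|\im s|\le\al$ for each $\al<\al_0$. I would then apply Davis's Theorem~\ref{t:davis} directly to obtain $|\hat c_m - c_m|\le C/\bigl(m\, d(\Gamma_\al,z_0)^m(e^{\al \Nf}-1)\bigr)$, which is one factor of $d(\Gamma_\al,z_0)$ \emph{better} than \eqref{cmerr}. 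The remainder of the argument from \eqref{cmerr} onward then passes through unchanged, with the optimisation $\hat\al = \al_0 - p/\Nf$ producing the second term of \eqref{eps}.

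The only genuinely new difficulty lies in the $m=0$ coefficient, whose integrand contains $\log\frac{1}{Z(s)-z_0}$ with a branch discontinuity. Here I would re-use the device from the proof of Theorem~\ref{t:convs}: choose the branch cut in the $s$-plane to run from $s_0 := Z^{-1}(z_0)$ (where $\im s_0 = \al_0$) vertically outward to the top edge of a slightly larger strip in which $\tsig$ and $Z$ remain analytic, and modify the integrand by a purely imaginary $is$ term to render it $2\pi$-periodic without affecting $\re\hat v$ at the target. Applying Lemma~\ref{l:davisbranch} to this modified integrand would then bound $|\hat c_0 - c_0|$ by a $\Nf$-independent constant times $(e^{\al_0 \Nf}-1)^{-1}$, which is dominated by the second term of \eqref{eps} and thus absorbed into its constant $C$.

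The main obstacle I anticipate is the bookkeeping for the $m=0$ coefficient: I must route the cut so that Lemma~\ref{l:davisbranch} applies (the cut must avoid the region $|\im s|\le \al_0$), confirm that the $is$ modification does not affect $\re\hat v$ at the target $z$ (so that the real-part comparison with $u$ remains valid), and verify that the jump integral is bounded uniformly in $\Nf$. Once these technicalities are arranged, the rest of the argument is a direct transcription of the double-layer proof.
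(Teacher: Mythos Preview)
Your plan is correct in spirit and, for the coefficients $m\ge 1$, coincides exactly with the paper's argument. Two points, however, differ from the paper.

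First, you need not prove a single-layer analog of Proposition~\ref{p:vanal}: the analyticity of $v$ in a disc of radius $\rho>R$ about $z_0$ is a \emph{hypothesis} of Theorem~\ref{t:eps}, and Theorem~\ref{t:epss} inherits it verbatim (``with the same conditions''). The Cauchy estimate $|c_m|\le C/\rho^m$ then follows at once, with no further work.

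Second, your handling of $c_0$ has a gap. You want to apply Lemma~\ref{l:davisbranch} in a strip \emph{slightly wider} than $\al_0$, with the cut running outward from $s_0$. But the hypotheses only give holomorphy of $\tsig$ in the open strip $|\im s|<\al_0$; no wider strip is assumed to exist, so Lemma~\ref{l:davisbranch} cannot be invoked there. The paper avoids this entirely by working, for $m=0$ as for all other $m$, in the strip $|\im s|\le\al$ with $\al<\al_0$. In that strip the branch point $s_0$ lies \emph{outside}, so after the $+is$ periodisation you already identified, the modified integrand is $2\pi$-periodic and analytic there, and the plain Davis theorem applies with $F\le C\bigl|\log d(\Gamma_\al,z_0)\bigr|$ on the strip edges. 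This yields
\[
|\hat c_0 - c_0|\;\le\; C\,\frac{\bigl|\log d(\Gamma_\al,z_0)\bigr|}{e^{\al\Nf}-1}\,.
\]
The paper's final step is then a single observation: for every $m\ge 0$ the single-layer coefficient error so obtained is dominated (after a change of constant) by the corresponding double-layer bound \eqref{cmerr}, since $|\log d|\le C/d$ and $1/(m\,d^m)\le C/d^{m+1}$ for $d$ bounded above. Hence everything from \eqref{cmerr} onward in the proof of Theorem~\ref{t:eps}, including the optimisation $\hat\al=\al_0-p/\Nf$, carries over without modification---no separate treatment of the $m=0$ term is needed.
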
 
\begin{proof}
Using \eqref{cms} and the Davis theorem,
the coefficient errors analogous to
\eqref{cmerr} are
$$
|\hat{c}_0-c_0| \le C \bigl|\log d(\Gamma_\al,z_0)\bigr|
\frac{1}{e^{\al \Nf} - 1}
~, \qquad
|\hat{c}_m-c_m| \le 
\frac{C}{m\,d(\Gamma_\al,z_0)^{m}} \frac{1}{e^{\al \Nf} - 1}
~,\quad m=1,2,\ldots
$$
But for each $m=0,1,\ldots$, this error is smaller than that in \eqref{cmerr},
after possibly a change in the constant $C$.
The rest of the proof follows through.
\end{proof}
\begin{rmk}
Note that the change in the constant $C$ referred to 
is generally in the favorable direction:
since $d(\Gamma_\al,z_0) < R$, the constant
may be multiplied by a factor given by the larger of $R$ and $R|\log R|$,
which are usually small.
Also note that, since factors of $1/m$ arise in the single-layer case,
but were not taken advantage of, the $p$-convergence could probably be
improved slightly.
\label{r:slpbetter}
\end{rmk}

As an application,
we now report numerical results for the interior Laplace--Neumann BVP
\bea
\Delta u & = & 0 \qquad \mbox{ in }\Omega
\label{lapneu1}
\\
\partial u / \partial n & = & f \qquad \mbox{ on }\pO
\label{lapneu2}
\eea
which has a solution only if $f$ has zero mean on $\pO$, and in that
case the solution is unique only up to an additive constant.
Following \cite[Sec.~7.2]{atkinson} we use the integral equation
\be
(D^\ast + K + \half I)\sigma = f~,
\label{bieneu}
\ee
where $D^\ast$ has kernel $k(x,y) =
\partial \Phi(x,y)/\partial n(x)$, and
$K$ is the boundary operator which returns the value of its operand
at some (fixed but arbitrary) point on $\pO$. 
The solution is then recovered up to an unknown constant by \eqref{slp};
we compare against the exact interior solution after subtracting the
value of the constant measured at a single point.

As with the Dirichlet case,
we test with boundary data coming from the entire function $u(x,y)=e^y \cos x$,
or from the function
with a nearby singularity $u(x,y) =  \log \|(x,y)-(1,0.5)\|$.
In the entire case, we find, as for the Dirichlet case, that
the convergence of the \Ny\ method saturates at around $N=130$.
Surrogate expansion (using the \Ny\ interpolant)
then gives a maximum relative error of $6\times 10^{-15}$ at $p=10$ and
$\beta=4$.%
\footnote{Curiously, the trigonometric interpolant in this
case does not become fully accurate until $N=340$, which is {\em much} more
than the value $N=130$ for the Dirichlet case.
We cannot explain this---neither result is
as predicted by Remark~\ref{r:twice}---and
it tells us that there is still more to understand about the \Ny\ method
convergence rates in analytic BVP settings.}
For the singularity case, convergence of the \Ny\ method
is complete at around a value $N=200$
and the maximum relative surrogate evaluation error is then
found to be $6\times 10^{-13}$ at $p=24$ and $\beta=5.5$.
Notice that, for both data types,
these single-layer errors are improved by at least one digit over the
double-layer errors reported in Sec.~\ref{s:sing};
this may be explained by Remark~\ref{r:slpbetter}.
Indeed, the convergence plots analogous to
Fig.~\ref{f:rfnk0conv}(a) and (b) are very similar but show a gain of
around 1 extra digit of accuracy.

\section{The Helmholtz equation and an $O(N)$ close evaluation scheme}
\label{s:helm}

We now move to a PDE for which we no longer have
theorems, but which has important applications.

\bfi 
\qquad
\bmp{2.8in}(a)
{\small $\log_{10} |u^{(N)}-u|/\|u\|_\infty$ \;\; ext BVP, $u$ pt src}\\
\ig{width=2.8in}{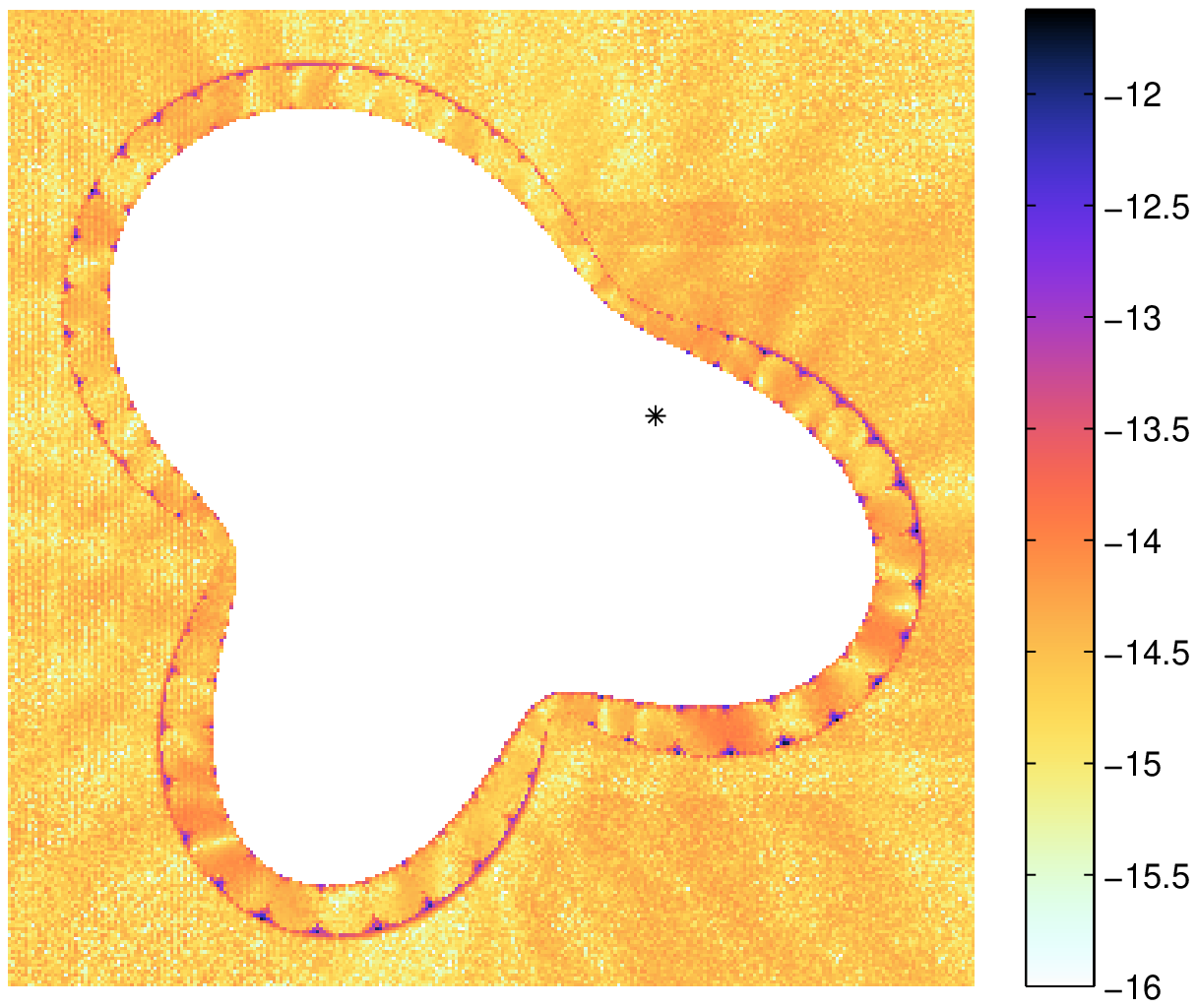}
\emp
\qquad
\bmp{2.2in}(b)
{\small $\log_{10} \|\hat u-u\|_\infty/\|u\|_\infty$, \;\; $N=340$\\
\mbox{}\qquad $u$ pt src, \;\; trig poly interp}\\
\ig{width=2in}{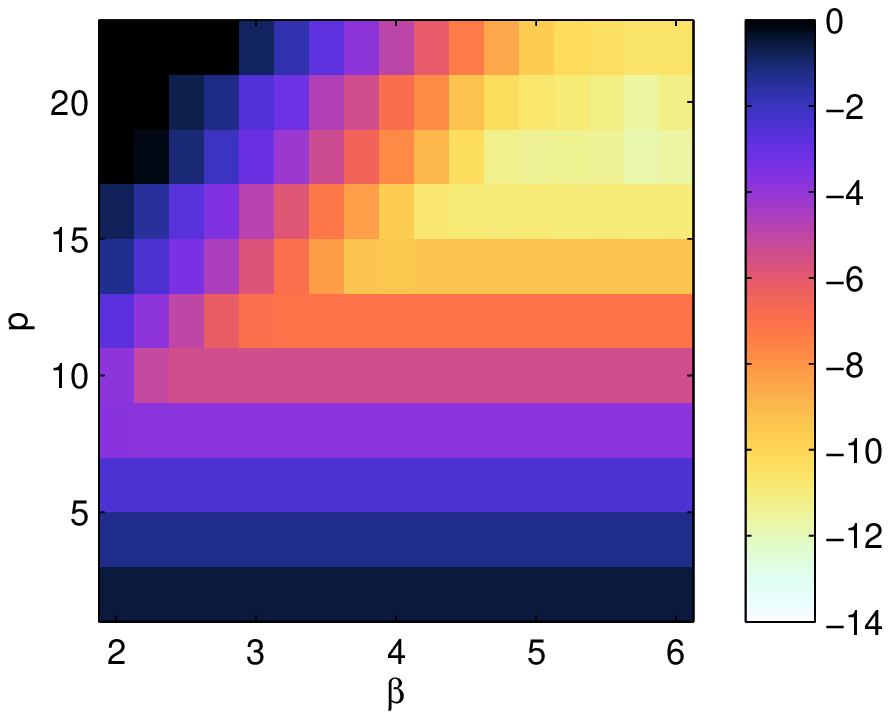}
\vspace{.3in}
\mbox{}
\emp
\ca{Relative error for the surrogate
scheme for the exterior Helmholtz Dirichlet problem at $\om=30$
(the diameter is 12 wavelengths),
with known solution $\Phi(\cdot,x_0)$ with $x_0$ as shown by the $\ast$ symbol.
(a) error plot for $N=340$, $p=18$ and $\beta=6$
(many boxes are apparent).
(b) $L^\infty(\Omega)$ error convergence with respect to parameters
$p$ and $\beta$, at fixed $N=340$.
}{f:rfnk30}
\efi

\subsection{Implementation and convergence test for the Helmholtz equation}
\label{s:implhelm}

The above close evaluation scheme for $\om=0$ with real-valued potentials
is very easily adapted to the Helmholtz
equation ($\om>0$) with complex potentials, by replacing a couple of
formulae.
The $p$-term Taylor expansion \eqref{vqbx} for $v$
(whose real part was taken to get $u$),
is replaced by
a local (Fourier-Bessel) expansion with $2p{-}1$ terms,
$$
\hat u(z) = \sum_{|m|<p} c_m e^{im\theta} J_m(\om r)
~,
\qquad
\mbox{ where } \; z-z_0 = r e^{i\theta}, \qquad z \mbox{ in box } B
~,
$$
i.e.\ $(r,\theta)$ is the polar coordinate system with origin $z_0$.
For the single-layer potential,
recalling \eqref{Phi} and Graf's addition formula
\cite[(10.3.7)]{dlmf}, the formula \eqref{cms} is replaced by
\be
c_m \;=\; \frac{i}{4} \int_\pO e^{-im\theta_y} H^{(1)}_m (\om r_y) \sigma(y) ds_y
~, \qquad |m|<p~,
\label{hcms}
\ee
where $(r_y,\theta_y)$ are the polar coordinates of the point $y$
relative to the origin $z_0$.
Using the addition formula,
the reflection formulae, after some simplification,
the Cauchy formula \eqref{cmd} is replaced by
\be
c_m \;=\; \frac{i\om}{8} \int_\pO \left[
e^{-i(m-1)\theta_y - i\nu_y} H^{(1)}_{m-1}(\om r_y) -
e^{-i(m+1)\theta_y + i\nu_y} H^{(1)}_{m+1}(\om r_y)  
\right] \tau(y) ds_y
~, \qquad |m|<p~,
\label{hcmd}
\ee
where $\nu_y$ is the angle of the outward normal at $y\in\pO$.
The use of the above three formulae is all the change needed
to make the Helmholtz version of the scheme.

We have already seen in Fig.~\ref{f:err}(d) that
the Laplace equation predictions for the native evaluation error
(theorems \ref{t:convd} and \ref{t:convs})
also hold well for the low-frequency Helmholtz equation.
This is to be expected,
since for $\om>0$ the fundamental solution \eqref{Phi}
remains analytic away from the origin,
so the Davis theorem 
applies to the parametrized
\eqref{dlp} and \eqref{slp},
allowing the Laplace convergence rate to be approached.
(We leave the Helmholtz equivalents of
the tight theorems \ref{t:convd} and \ref{t:convs} for future work.)
Therefore we apply the same criterion for the bad annular neighborhood
$\Omega_\tbox{bad}$
as in section~\ref{s:surr}.

We test the scheme in the context of BVP applications, namely the
Dirichlet problem in an exterior domain $\Omega$,
with the usual Sommerfeld radiation condition \cite[(3.62)]{coltonkress},
for which the integral equation \eqref{bie}
is replaced by the so-called combined-field formulation
\cite[p.~48]{coltonkress}
\be
(D - i\om S + \half I) \tau = f
\label{biecfie}
\ee
which is well conditioned for all $\om>0$.
Here the potential is represented as
\be
u(x) = \int_\pO \left[ \frac{\partial \Phi(x,y)}{\partial n(y)}
- i\om \Phi(x,y) \right]
\tau(y) ds_y ~,
\qquad x\in\Omega~,
\label{cfie}
\ee
and we choose the right-hand side data as coming from an interior
point-source shown by the $\ast$ in Fig.~\ref{f:rfnk30}(a).
To achieve spectral accuracy in the \Ny\ method, special
quadratures for the weak logarithmic singularity are needed;
we use the scheme of Kress \cite{kress91} (also see \cite{hao}).

We fix $\om=30$ (thus $\pO$ is 12 wavelengths across),
and find that the \Ny\ method converges at around $N=340$.
A maximum relative surrogate
evaluation error around $3\times 10^{-12}$ is then achieved
at $p=18$, $\beta=6$, as shown in Fig.~\ref{f:rfnk30}(a).
This is dominated by errors in the box corners furthest from
$\pO$, especially those with distortion due to a convex part of $\pO$.
Since this holds largely independently of the singularity location $x_0$,
we believe it is instead controlled by the wavenumber $\om$.
The $L^\infty(\Omega)$ convergence with $p$ and $\beta$ is
very similar to the Laplace case, as shown in
Fig.~\ref{f:rfnk30}(b). $p=18$ is close to optimal, since for
$p>20$ the error
worsens---the explanation is believed to be as in Remark~\ref{r:cancel},
but demands further study.
However, simply by making the boxes slightly narrower and more numerous
by setting $N_B = \lceil N/4 \rceil$, with 25\% more effort
we cut the maximum relative error to $3\times 10^{-13}$.




\begin{rmk}[interpolants revisited]
A natural question is: does Remark~\ref{r:twice} hold for the Helmholtz
equation? The answer is no, at least for the Kress scheme
(which is one of the best known \cite{hao}).
Although a \Ny\ interpolant does exist for the weakly-singular
kernels \cite[(3.3)]{kress91}, we believe it has little advantage
over the trigonometric interpolant.
This is because the product-quadrature scheme of Kress
is only accurate for Fourier components of indices up to $N/2$.
In data with a nearby singularity with preimage at a distance $\al_\ast$
from the real axis,
the convergence rate is thus only $e^{-\al_s N/2}$, or
half that of the Laplace case.
\label{r:helmtwice}
\end{rmk}

\subsection{Evaluation via a correction to the fast multipole method}
\label{s:ON}

At higher frequencies $\om$ and/or complex geometries $\pO$,
the value of $N$ needed for convergence of the \Ny\ method
is pushed higher.
For instance, with fixed geometry the high frequency asymptotics is
empirically $N = O(\om)$, i.e.\ a constant number of nodes per wavelength
\cite{mfs,helsingtut}.
The surrogate scheme as presented requires $O(p N^2)$ effort, by
evaluating $p$ coefficients at $N_B = O(N)$ centers using
$M = O(N)$ fine node kernel evaluations for each.
Clearly, a better scaling with $N$ would be preferred.
We now show that at fixed frequency,
linear scaling is easy to achieve by locally correcting
the FMM.

Fixing a box $B$ with center $z_0$,
and taking e.g.\ the Helmholtz single-layer potential \eqref{slp},
we split the integral into ``near'' and
``far'' parts using a cut-off radius $G$.
We apply the surrogate local expansion only for the near part to get
\be
\hat u(x) \;=\;
\sum_{|m|<p} c_m^\tbox{(near)}
e^{im\theta} J_m(\om r)
\;+\;
\int_{y\in\pO,|y-z_0|> G} \!\! \Phi(x,y) \sigma(y) ds_y
~, \qquad x \mbox{ in box } B~,
\label{slpsplit}
\ee
where, adapting \eqref{hcms},
the coefficients of the potential due to the near part of the
integral only are,
\be
c_m^\tbox{(near)} = \frac{i}{4} \int_{y\in\pO,|y-z_0|\le G} \!\!\!
e^{-im\theta_y} H^{(1)}_m (\om r_y) \sigma(y) ds_y
\approx
\frac{i\pi}{2M} \sum_{j\in J_\tbox{near}}
e^{-im\theta_{Z(s_j)}} H^{(1)}_m (\om r_{Z(s_j)}) \tsig(s_j) Z'(s_j)
~,
\label{hcmsnear}
\ee
and where $J\tbox{near}:=\{j: 
\,|Z(s_j)-z_0|\le G\}$
is the index set of the ``near'' subset of the fine nodes $\{s_j\}_{j=1}^M$.
The cut-off $G$ must be large enough that the fictitious singularities
induced at the ends of the near interval are distant enough
not to slow down the convergence of the local expansion,
thus we choose $G$ several times the box radius $R$,
and so evaluating these sums takes $O(p)$ effort.
Applying the fine quadrature to the far part of the integral in \eqref{slpsplit}
gives
\be
\int_{y\in\pO,|y-z_0|> G} \!\! \Phi(x,y) \sigma(y) ds_y \;\approx\;
\frac{2\pi}{M} \sum_{j=1}^M \Phi(x,Z(s_j)) \tsig(s_j) Z'(s_j)
-
\frac{2\pi}{M} \sum_{j\in J_\tbox{near}} \Phi(x,Z(s_j)) \tsig(s_j) Z'(s_j)
~.
\label{fmmcorr}
\ee
The first sum can be evaluated {\em for all target points}
$x$ {\em in all boxes}
in a single FMM call. Making the reasonable assumption
that the user demands $O(1)$ targets per box, this FMM call
requires $O(N)$ effort.
Finally, the second sum is a local correction that takes $O(1)$ effort.
The total effort to evaluate $\hat u(x)$ for all $x$ in all
$N_B$ boxes is thus $O(pN)$. A similar scheme applies for the double-layer
potential.

Since the first (FMM) term in \eqref{fmmcorr} accurately approximates
$u$ everywhere except in a narrower neighborhood $A_\al$
with $\al=10\pi/\beta N$,
we in fact may, and will, shrink the boxes in the normal direction, as long as they cover $A_\al$,
whilst using this first term in the remaining part of $\Omega_\tbox{bad}$.
This has the advantage of avoiding larger errors that tend to
occur in the distant corners of boxes.

\begin{rmk}[no end corrections]
At this point the
reader might very well suspect that, since both of the above integrals
are on 
non-periodic intervals, the trapezoid rule would give at best
low-order $O(1/M)$ convergence unless higher-order
end correction rules were used.
In fact, spectral accuracy equal to that of the trapezoid rule on
the original periodic integrand is observed.
The reason is slightly subtle: 
the endpoint errors
in the two integrals cancel, since their sum ultimately represents
to high order
the result of the trapezoid rule applied to the periodic analytic integrand in
\eqref{slp}.
Thus our simple splitting
achieves spectral accuracy, needing
neither a partition of unity nor end-point quadrature corrections.
\end{rmk}

\bfi[ht!] 
\hspace{-.2in}\bmp{3.5in}(a)
{\small $u_\tbox{tot}$}\\
\ig{width=3.5in}{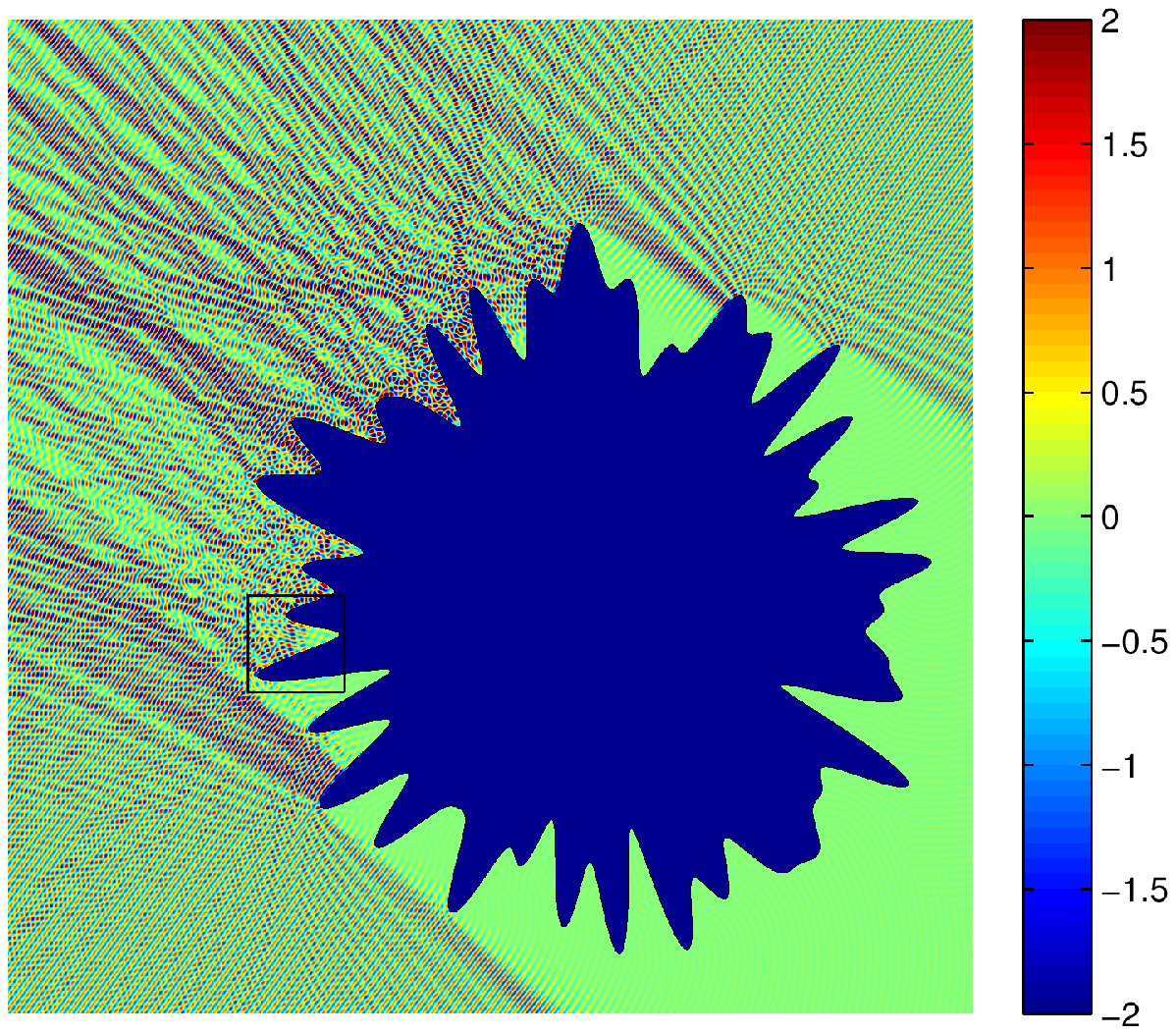}
\emp
\quad
\bmp{2.35in}(b)
{\small $u_\tbox{tot}$ zoomed}
\vspace{.08in}

\ig{width=2.35in}{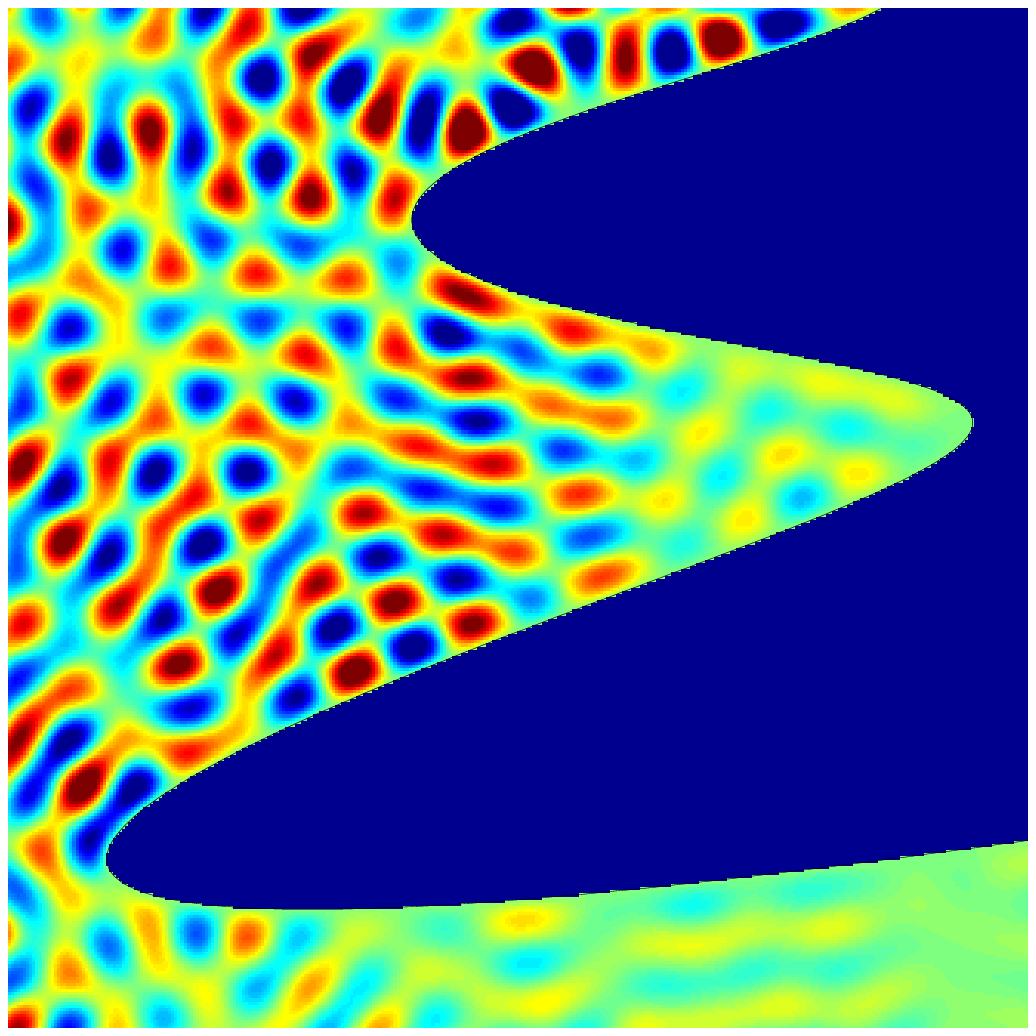}
\vspace{.4in}

\mbox{}
\emp
\vspace{.1in}

\hspace{-.2in}\bmp{3in}(c)
{\small $\log_{10} |u^{(N)}-u|/\|u_\tbox{tot}\|_\infty$}\\
\ig{width=3in}{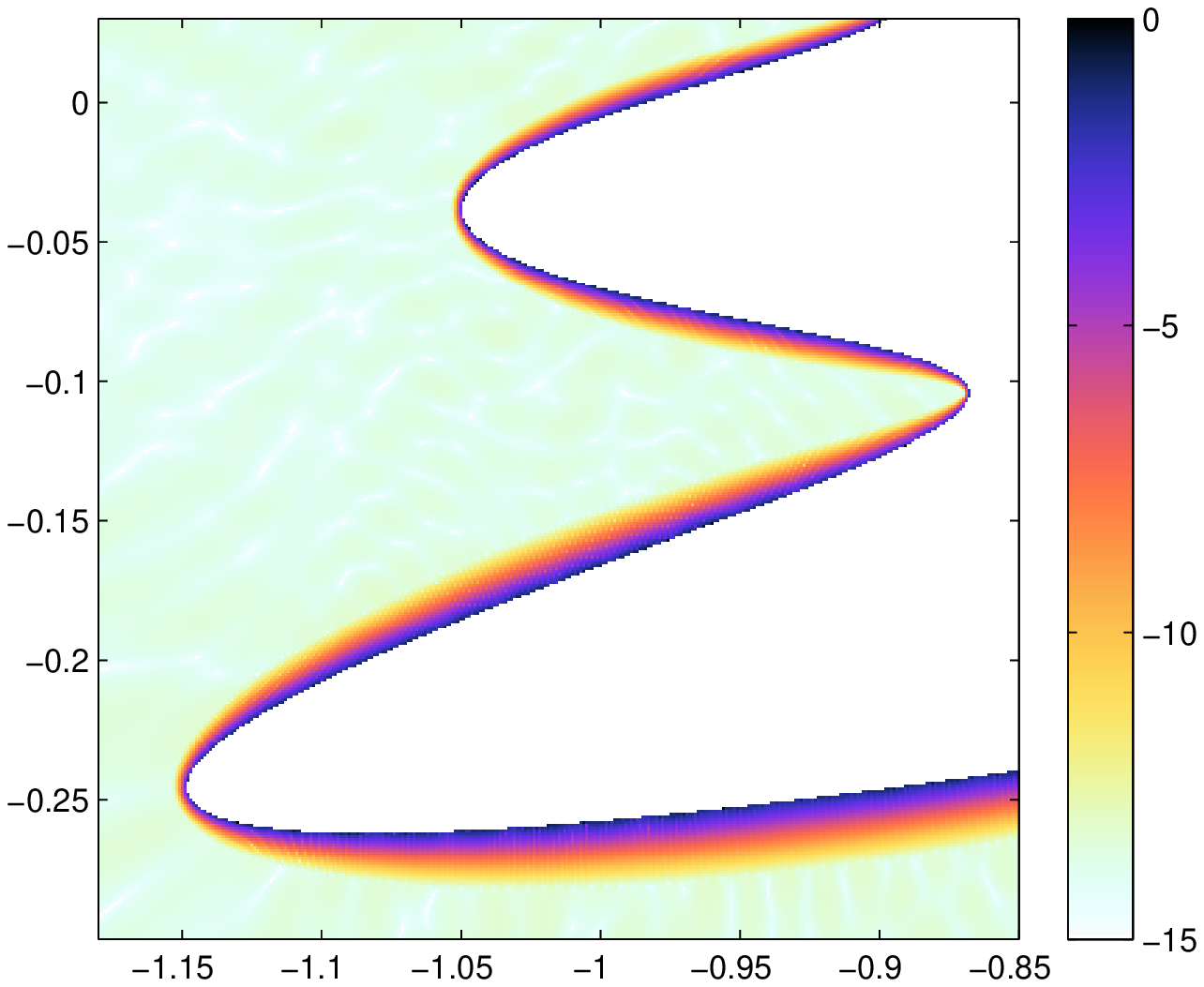}
\emp
\quad
\bmp{3in}(d)
{\small $\log_{10} |\hat u-u|/\|u_\tbox{tot}\|_\infty$}\\
\ig{width=3in}{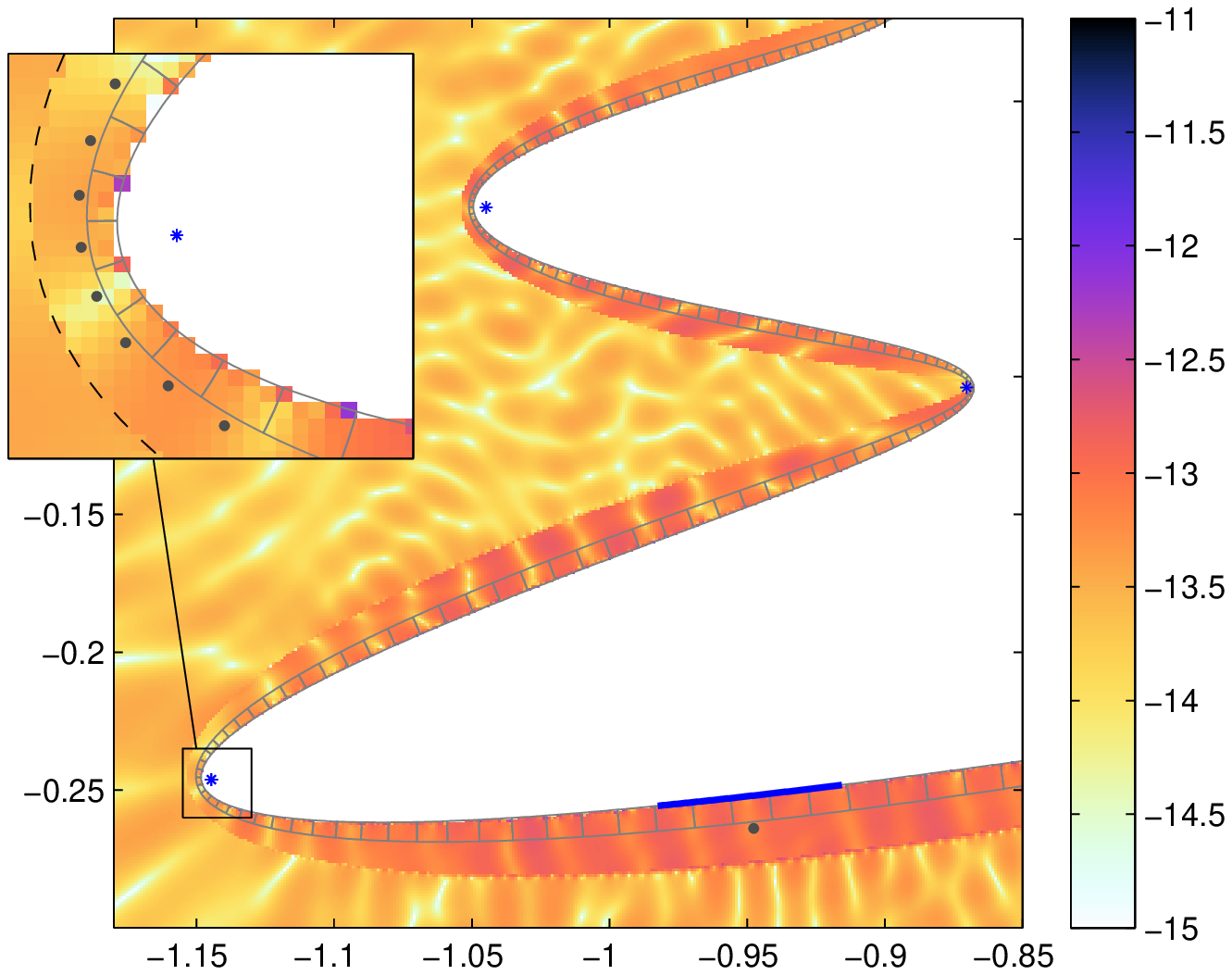}
\emp
\vspace{.1in}
\ca{High frequency sound-soft
scattering example, $\om=250$ (diameter is 100 wavelengths),
needing $N=9000$ nodes.
(a) Total field $u_\tbox{tot} = u_\tbox{inc} + u$.
(b) Zoom of total field for black box shown in (a).
(c) Relative error (relative to $\|u_\tbox{tot}\|_\infty=6.2$)
in native evaluation $u^{(N)}$ in same region as (b).
(d) Relative error in surrogate scheme $\hat u$ in same region as (b),
with $p=26$, $\beta = 6$.
Boxes are shown in grey, and a single center $z_0$ (grey dot)
with its corresponding ``near'' set of boundary points (thick blue line),
and domain Schwarz singularities ($\ast$).
Note the change in color scale between (c) and (d).
The inset in (d) is a zoom of the highly convex region,
and also shows all centers (grey dots) and
$\Gamma_{\al_\tbox{bad}}$, the boundary of $\Omega_\tbox{bad}$ (dotted line).
}{f:high}
\efi

\subsection{High frequency scattering example}
\label{s:high}

We now detail the application of the
$O(N)$ method just described to a high frequency scattering problem
with Dirichlet boundary condition; this corresponds to
an acoustically sound-soft obstacle.
We choose a complicated (but analytic) boundary $\pO$ given by the polar
Fourier series
$f(\theta) = 1 + \sum_{n=1}^{40} a_n \cos(n\theta) + b_n \sin(n\theta)$,
with $a_n$, $b_n$ uniform random in $[-0.04,0.04]$, and parametrized
by $\theta$; see Fig.~\ref{f:high}(a).
We choose $\om=250$ such that the obstacle is 100 wavelengths
across.
For the scattering problem with plane wave
$u_\tbox{inc}(x) = e^{i\omega \hat d \cdot x}$ incident at angle
$\hat d = (\cos -\pi/5,\sin -\pi/5)$,
the total potential (physical field)
is $u_\tbox{tot} = u_\tbox{inc} + u$ where the (radiative)
scattered potential $u$ solves the exterior Dirichlet problem with
boundary data $f = -u_\tbox{inc}|_\pO$. For this BVP, as before,
we solve the integral equation \eqref{biecfie} then evaluate $u$
via \eqref{cfie}.

We find that $N=9000$ is needed to get 13-digit convergence of the \Ny\ method
for $u$,
as assessed at a variety of
exterior points lying outside the bad annular neighborhood $\Omega_\tbox{bad}$.
This corresponds to an average of 13 nodes per wavelength, although it
is as little as 4.6 nodes per wavelength where $|Z'(s)|$ is largest.
It takes 73 s to fill the dense $N\times N$ matrix,
and a further 14 s to solve the system via GMRES
using dense matrix-vector products
(needing 95 iterations to exceed a relative residual of $10^{-12}$).

\begin{rmk} Here all timings are reported for a System76 laptop
with 2.6 GHz Intel i7-3720QM CPU, 16GB of RAM, running
\matlab\ 2012b \cite{matlab},
\mpspack\ version 1.32 \cite{mpspack},
and {\tt FMMLIB2D} version 1.2 \cite{hfmm2d}.
We use \matlab's native Bessel functions.
For Hankel functions in \eqref{hcmsnear} and the second sum
in \eqref{fmmcorr} (and their double-layer analogs)
we use a MEX interface to
{\tt hank103.f} \cite{hfmm2d} for $m=0,1$, and upwards recurrence
\cite[(10.6.1)]{dlmf}.
Most operations use only a single core; the only ones which
exploit all four cores are {\tt FMMLIB2D} and the matrix-vector
products in GMRES.
\end{rmk}

We fix a set of around $8\times 10^6$ evaluation points, namely those
on a $3300\times 3400$
grid of spacing $10^{-3}$ which lie in the exterior of $\pO$;
see Fig.~\ref{f:high}.
Evaluation of $u$ at these targets using the $N$ native nodes and the FMM
takes 24 s, and gives the relative errors in Fig.~\ref{f:high}(c):
large errors are apparent near $\pO$.
We now apply the surrogate close evaluation to the
points lying in $\Omega_\tbox{bad}$, which number
around $2.2\times 10^5$.
We set $N_B=\lceil N/3\rceil = 3000$,
rather more than the value $\lceil N/5\rceil$ recommended before: this
helps reduce errors by shrinking the box radii $R$.
We find convergence at around $p=26$, $\beta=6$
(this high $p$ is needed because boxes are up to 0.7 wavelengths in size).
A cut-off radius $G$ that does not induce additional error was found to
be 1.5 times the maximum width of
the bad annular neighborhood in the surrounding 7 boxes;
this gives between 84 and 326 ``near'' fine points,
but with a mean of only 107 (note that this is less
than the 288 that would be required for three 16-node 
panels and the same $\beta$; see remark \ref{r:panels}).
With the above parameters we achieve a maximum
error in $u$ of
$6 \times 10^{-12}$, in 28 s computation time.
Much of this is spent on direct sums in \eqref{hcmsnear} and \eqref{fmmcorr}
(the fine FMM in \eqref{fmmcorr} takes only 1.5 s), as well as geometry
and inevitable \matlab\ overheads.
We found this to be 50 times faster than
directly applying the $O(N^2)$ formulae from section~\ref{s:implhelm},
justifying the utility of our proposed $O(N)$ scheme.
Fig.~\ref{f:high}(d) plots a zoom of the resulting relative error.

\begin{rmk}[Reference solution]
In previous examples $u$ was analytically known.
In order to assess the error in $\hat u$ for this example
we compute a reference $u$ in the following expensive fashion.
Outside $\Omega_\tbox{bad}$ we used the FMM from the density $\{\tau_j\}_{j=1}^N$
refined by a factor 10 by trigonometric interpolation.
Inside $\Omega_\tbox{bad}$ we did the same but with a factor $10^3$,
apart from the few hundred points in the narrow ribbon inside
$\Gamma_{10^{-3}\al_\tbox{bad}}$. For these last points we used 9th-order polynomial
extrapolation from points distances $\{2^j(10^{-2}\pi/N)|Z'(s)|\}_{j=0}^9$
along the normal direction.
This appears to give around 12 digits; we are not able
reliably to get more.
\end{rmk}

\begin{rmk}[Convex vs concave]
The largest errors occur at the corners
of boxes near highly-convex parts of the boundary;
we believe that this is due to the
nearby interior Schwarz singularity of $\pO$, which
generically induces a singularity in the Helmholtz continuation of the
solution $u$ \cite{Mi86} (briefly reviewed in \cite[Sec.~3.1]{mfs}).
The latter in turn slows the $p$-convergence of the expansion.
By contrast, concave parts have Schwarz singularities on the physical side of
$\pO$ which thus do not affect $u$.
We believe this explains why lower accuracy is reported at
convex (but not concave) locations in QBX \cite{qbx}.
\end{rmk}

\section{Conclusion and discussion}
\label{s:conc}

Firstly, we analyzed the spatial distribution of
the error in evaluating Laplace layer potentials
using the popular global periodic trapezoid rule.
The key tools were generalizations of the Davis theorem,
and the annular conformal map between the complex parameter plane and
the physical plane.
We found (Theorems~\ref{t:convd} and \ref{t:convs})
that the exponential convergence rate at a point is
simply the {\em imaginary part of its preimage under this map},
a result believed to be new.
Error contours are thus given by ``imaginary parameter translations''
of the boundary; these sweep out an annular neighborhood $\Omega_\tbox{bad}$
where errors are unacceptable.
Empirically, errors are similar in the Helmholtz case.

Secondly, we devised a surrogate local expansion method
for accurate evaluation in $\Omega_\tbox{bad}$.
Our main analytical result is its exponential convergence
(and hence that of QBX \cite{qbx})
in the analytic Laplace case (Theorems~\ref{t:eps} and \ref{t:epss}).
The scheme can be implemented via the FMM in $O(N)$ time,
and generalizes easily to the Helmholtz equation,
as we showed in a challenging high-frequency scattering application.
Our scheme gives errors close to machine precision, given density values at
only the number $N$ of nodes sufficient for the \Ny\ method to converge.
Our experiments also highlighted the need for more
understanding of the \Ny\ exponential convergence rate
and of the situations in which
the trigonometric interpolant is inferior to the \Ny\ one.

\begin{rmk}[Adaptivity]
We used global quadrature in this study;
an adaptive panel-based (composite) underlying quadrature, however, would be
preferred in a production code, and would allow refinement at corners.
The surrogate scheme is easy to implement with Gaussian panels,
needing only one Lagrange interpolation to the fine nodes.
Furthermore, the fast scheme of section~\ref{s:ON} becomes simpler:
the only FMM needed is the native evaluation $u^{(N)}$,
corrected in $O(N)$ effort by refining only the 3 nearest panels.
Comparison against the recent panel-based scheme of Helsing \cite{helsingtut}
would be desired.
\label{r:panels}
\end{rmk}

We expect the generalization to surfaces in $\R^3$ to be fruitful,
both to evaluate close to a surface, and to build
singular \Ny\ quadratures on the surface
(generalizing the QBX scheme in $\R^2$ \cite{qbx}).
Since only smooth interpolation, the local expansion, and the
addition theorem are needed,
this is simpler to implement than most existing high-order schemes
\cite{coltonkress,brunoFMM,ying06,bremer3d}.
Indeed, since initial submission of the present work,
numerical results in $\R^3$ have been encouraging \cite{qp3dqbx},
and an analysis of QBX,
including the Helmholtz single-layer potential in $\R^3$,
has appeared \cite{QBX2}.
In $\R^3$, since one can no longer
exploit a simple link between Laplace solutions and holomorphic functions,
the analysis relies on estimates for spherical harmonics.
We suggest that an analysis for the present work in the case of the Helmholtz
equation in $\R^2$
should be tractable via Vekua's map from holomorphic functions
to Helmholtz solutions \cite{vekua,He57,timothesis,moiola11}.




\section*{Acknowledgments}
The author is very grateful to Hanh Nguyen, via the support of
the Women in Science Project (WISP) at Dartmouth College,
for testing a Laplace double-layer implementation in the spring of 2011.
The author also benefited from
discussions with Stephen Langdon, Zydrunas Gimbutas, Leslie Greengard,
Andreas Kl\"ockner, Mike O'Neil, and Nick Trefethen.
This work is supported through the National Science Foundation via
grants DMS-0811005 and DMS-1216656.

\appendix
\section{Code}
While we have not yet
released a formal package implementing the methods of this
paper, we make available codes that generated the figures
here: \\
\mbox{}\qquad{\tt http://math.dartmouth.edu/$\sim$ahb/software/closeeval}\\
These require \matlab\ \cite{matlab},
\mpspack\ version at least 1.32 \cite{mpspack},
and {\tt FMMLIB2D} version 1.2 \cite{hfmm2d}.

\bibliography{alex}
\end{document}